\documentclass[12pt,reqno]{amsart}
\usepackage{amsmath}
\usepackage{tikz}
\usepackage[all]{xy}
\usetikzlibrary{decorations.markings}
\usepackage{color}
\usepackage{amsthm}
\usepackage{amsfonts}
\usepackage{amssymb}
\usepackage{setspace}
\usepackage{youngtab}
\usepackage{ytableau}
\usepackage{empheq}
\usepackage[bookmarks,colorlinks,breaklinks]{hyperref}  
\hypersetup{linkcolor=blue,citecolor=blue,filecolor=blue,urlcolor=blue} 
\theoremstyle{plain}

\oddsidemargin=0in
\evensidemargin=0in
\textwidth=6.50in             
\definecolor{gr40}{gray}{0.40}

\headheight=10pt
\headsep=10pt
\topmargin=.5in
\textheight=8in

\newcommand{\beqn}{\begin{eqnarray}}
\newcommand{\eeqn}{\end{eqnarray}}

\newtheorem{Theorem}{Theorem}[section]
\newtheorem{Proposition}[Theorem]{Proposition}
\newtheorem{Definition}[Theorem]{Definition}
\newtheorem{Example}[Theorem]{Example}
\newtheorem{Corollary}[Theorem]{Corollary}
\newtheorem{Lemma}[Theorem]{Lemma}
\newtheorem{Remark}[Theorem]{Remark}

\newcommand{\asc}{\mathrm{asc}}
\newcommand{\supp}{\mathrm{supp}}
\newcommand{\arm}{\mathrm{arm}}
\newcommand{\leg}{\mathrm{leg}}
\newcommand{\comp}{\mathrm{comp}}
\newcommand{\set}{\mathrm{set}}
\newcommand{\des}{\mathrm{Des}}
\newcommand{\ctau}{\tau}
\newcommand{\ncsa}{\mathbf{s}_{\alpha}}
\newcommand{\ncsb}{\mathbf{s}_{\beta}}
\newcommand{\ncsg}{\mathbf{s}_{\gamma}}
\newcommand{\nch}{\mathbf{h}}
\newcommand{\rib}{\mathbf{r}} 
\newcommand{\ncs}{\mathbf{s}} 

\newcommand{\upl}{\mathfrak{t}}

\definecolor{myblue}{rgb}{0.4, 0.5, 0.7}
\newcommand{\myb}{*(myblue)}
\newcommand{\cw}{*(white)}


\begin{document}
\title{A Murnaghan-Nakayama Rule For Noncommutative Schur Functions}
\author{VASU V. TEWARI}
\address{
Vasu V. Tewari\\
Department of Mathematics\\
University of British Columbia\\
Vancouver, BC V6T 1Z2\\
Canada
}
\email{vasu@math.ubc.ca}

\begin{abstract}
We prove a Murnaghan-Nakayama rule for the noncommutative Schur functions introduced by Bessenrodt, Luoto and van Willigenburg. In other words, we give an explicit combinatorial formula for expanding the product of a noncommutative power sum symmetric function and a noncommutative Schur function in terms of noncommutative Schur functions. In direct analogy to the classical Murnaghan-Nakayama rule, the summands are computed using a noncommutative analogue of border strips, and have coefficients $\pm 1$ determined by the height of these border strips. The rule is proved by interpreting the noncommutative Pieri rules for noncommutative Schur functions in terms of box-adding operators on compositions.
\end{abstract}
\maketitle

\section{Introduction}
The Murnaghan-Nakayama rule \cite{Murnaghan,Nakayama} is a combinatorial procedure to compute the character table of the symmetric group. The combinatorial objects involved, that aid the said computation, are called border strips. An alternate description of the Murnaghan-Nakayama rule is that it gives an explicit combinatorial description of the expansion of the product of a power sum symmetric function $p_k$, where $k$ is a positive integer, and a Schur function $s_{\lambda}$ as a sum of Schur functions in the algebra of symmetric functions
\begin{eqnarray}\label{eqn:classicalMurNak} 
p_k\cdot s_{\lambda}=\displaystyle\sum_{\mu}(-1)^{ht(\mu/\lambda)}s_{\mu},
\end{eqnarray}
where the sum is over all partitions $\mu$ such that the skew shape $\mu/\lambda$ is a border strip of size $k$ and $ht(\mu/\lambda)$ is a certain statistic associated with the border strip $\mu/\lambda$.

In this article, we consider an analogue in the algebra of noncommutative symmetric functions. The role played by the Schur functions in the classical setting is taken by the noncommutative Schur functions introduced in \cite{BLvW}, that are dual to the quasisymmetric Schur functions \cite{HLMvW-1} arising from the combinatorics of Macdonald polynomials \cite{HHL-1}. To be more precise, we expand the product $\Psi_{r}\cdot \ncsa$ in the basis of noncommutative Schur functions. Here $\Psi_r$ denotes the noncommutative power sum symmetric function of the first kind, indexed by a positive integer $r$, while $\ncsa$ denotes the noncommutative Schur function indexed by a composition $\alpha$. The main theorem we prove is that, much like \eqref{eqn:classicalMurNak}, we have an expansion
\begin{eqnarray}\label{eqn:ncMurnak}
\Psi_{r}\cdot \ncsa=\displaystyle\sum_{\beta}(-1)^{ht (\beta/\!\!/\alpha)}\ncsb ,
\end{eqnarray}
where the sum is over certain compositions $\beta$ such that the skew reverse composition shape $\beta/\!\!/\alpha$ gives rise to a noncommutative analogue of a border strip of size $r$ and $ht (\beta/\!\!/\alpha)$ is a certain statistic similar to $ht(\mu/\lambda)$ in \eqref{eqn:classicalMurNak}.

The organization of this article is as follows. In Section \ref{sec:backgroundsym}, we introduce all the notation and definitions necessary for stating the classical Murnaghan-Nakayama rule. Section \ref{sec:backgroundNsym} introduces the algebra of noncommutative symmetric functions and the distinguished basis of noncommutative Schur functions amongst other bases. Furthermore, we introduce box-adding operators reminiscent of those in \cite{BSZ,fomin-greene-1} that will be fundamental to our proofs. The goal of Section \ref{sec:boxaddingmurnaghannakayama} is to give a rudimentary version of the noncommutative Murnaghan-Nakayama rule in terms of box-adding operators in Equation \eqref{advMurNak}. Finally in Section \ref{sec:finalmurnaghannakayama}, we give the noncommutative Murnaghan-Nakayama rule that mirrors the classical version in Theorem \ref{MurNakfinal}.

\subsection*{Acknowledgement}
The author is grateful to Stephanie van Willigenburg for suggesting the problem and for illuminating discussions.
\section{Background on symmetric functions}\label{sec:backgroundsym}
\subsection{Partitions}
We will start by defining some of the combinatorial structures that we will be encountering. All the notions introduced in this section are covered in more detail in \cite{macdonald-1,stanley-ec2,sagan}. Our first definition has to do with the notion of partition.
\begin{Definition}\label{def:partition}
A \emph{partition} $\lambda$ is a finite list of positive integers $(\lambda_1,\ldots,\lambda_k)$ satisfying $\lambda_1\geq \lambda_2\geq \cdots\geq \lambda_k$. The integers appearing in the list are called the \emph{parts} of the partition.
\end{Definition}
Given a partition $\lambda=(\lambda_1,\ldots,\lambda_k)$, the \textit{size} $\lvert \lambda\rvert$ is defined to be $\sum_{i=1}^{k}\lambda_i$. The number of parts of $\lambda$ is called the \textit{length}, and is denoted by $l(\lambda)$. If $\lambda$ is a partition satisfying $\lvert \lambda\rvert=n$, then we write it as $\lambda \vdash n$. By convention, there is a unique partition of size and length $0$, and we denote it by $\varnothing$.

We will be depicting a partition using its \textit{Ferrers diagram} (or \textit{Young diagram}). Given a partition $\lambda=(\lambda_1,\ldots,\lambda_k)\vdash n$, the Ferrers diagram of $\lambda$, also denoted by $\lambda$, is the left-justified array of $n$ boxes, with $\lambda_i$ boxes in the $i$-th row. We will be using the French convention, i.e. the rows are numbered from bottom to top and the columns from left to right. We refer to the box in the $i$-th row and $j$-th column by the ordered pair $(i,j)$. If $\lambda$ and $\mu$ are partitions such that $\mu \subseteq \lambda$, i.e., $l(\mu) \leq l(\lambda)$ and  $\mu_{i} \leq \lambda_{i}$ for all $i=1,2,\ldots,l(\mu)$, then the \textit{skew shape} $\lambda / \mu$ is obtained by removing the first $\mu_{i}$ boxes from the $i$-th row of the Ferrers diagram of $\lambda$ for $1\leq i\leq l(\mu)$.  The \textit{size} of the skew shape $\lambda/\mu$ is equal to the number of boxes in the skew shape, i.e. $\lvert \lambda \rvert-\lvert \mu \rvert$.

\begin{Example}
The Ferrers diagram for the partition $\lambda=(4,3,3,1)\vdash 11$ is shown below.
\begin{eqnarray*}
\ydiagram{1,3,3,4}
\end{eqnarray*}
\end{Example}
\subsection{Semistandard reverse tableaux}
In this subsection, we will introduce some classical objects that play a central role in the theory of symmetric functions.
\begin{Definition}\label{def:SSRT}
Given a partition $\lambda$, a \emph{semistandard reverse tableau (SSRT)} $T$ of \emph{shape} $\lambda$ is a filling of the boxes of $\lambda$ with positive integers, satisfying the condition that the entries in $T$ are weakly decreasing along each row read from left to right and strictly decreasing along each column read from bottom to top.
\end{Definition}
A \textit{standard reverse tableau (SRT)} $T$ of shape $\lambda \vdash n$ is an SSRT that contains every positive integer in $[n]=\{1,2,\ldots,n\}$ exactly once. We will denote the set of all SSRTs of shape $\lambda$ by $SSRT(\lambda)$. Given an SSRT $T$, the entry in box $(i,j)$ is denoted by $T_{(i,j)}$.


Let $\{x_1,x_2,\ldots\}$ be an alphabet comprising of countably many commuting indeterminates $x_1,x_2,\ldots$. Now, given any SSRT $T$ of shape $\lambda\vdash n$, we can associate a monomial $x^{T}$ with it as follows.
\begin{eqnarray*}
x^{T}=\prod_{(i,j)\in\lambda}x_{T_{(i,j)}}
\end{eqnarray*}
\begin{Example}
Shown below are an SSRT $T$ of shape $(4,3,3,1)$ and its associated monomial.
\begin{eqnarray*}
T=\ytableausetup{mathmode}
\begin{ytableau}
1\\3&3&2\\6&5&4\\7&7&6&3
\end{ytableau} \hspace{6mm} 
x^{T}=x_7^2x_6^2x_5x_4x_3^3x_2x_1
\end{eqnarray*}
\end{Example}

\subsection{Symmetric functions}
The algebra of symmetric functions, denoted by $\Lambda$, is the algebra freely generated over $\mathbb{Q}$ by countably many commuting variables $\{h_1,h_2,\ldots\}$. Assigning degree $i$ to $h_i$ (and then extending this multiplicatively) allows us to endow $\Lambda$ with a structure of a graded algebra. A basis for the degree $n$ component of $\Lambda$, denoted by $\Lambda^{n}$, is given by the \textit{complete homogeneous symmetric functions} of degree $n$, $$\{h_{\lambda}=h_{\lambda_1}\cdots h_{\lambda_k}: \lambda=(\lambda_1,\ldots,\lambda_k)\vdash  n\}.$$

A concrete realization of $\Lambda$ is obtained by embedding $\Lambda=\mathbb{Q}[h_1,h_2,\ldots]$ in $\mathbb{Q}[[x_1,x_2,\ldots]]$, i.e. the ring of formal power series in countably many commuting indeterminates $\{x_1,x_2,\ldots\}$, under the identification (extended multiplicatively)
$$h_i \longmapsto \text{ sum of all distinct monomials in $x_1,x_2,\ldots$ of degree $i$}.$$
This viewpoint allows us to think of symmetric functions as being formal power series $f$ in the $x$ variables with the property that $f(x_{\pi(1)},x_{\pi(2)},\ldots)=f(x_{1},x_{2},\ldots)$ for every permutation $\pi$ of the positive integers $\mathbb{N}$.

The \textit{Schur function} indexed by the partition $\lambda$, $s_{\lambda}$, is defined combinatorially as
\begin{eqnarray*}
s_{\lambda}=\displaystyle\sum_{T\in SSRT(\lambda)}x^{T}.
\end{eqnarray*}
Though not evident from the definition above, $s_{\lambda}$ is a symmetric function. Furthermore, the elements of the set $\{s_{\lambda}:\lambda\vdash n\}$ form a basis of $\Lambda ^{n}$ for any positive integer $n$.

Another important class of symmetric functions is given by the \textit{power sum symmetric functions}. The power sum symmetric function, $p_k$ for $k\geq 1$ is defined as 
\begin{eqnarray*}
p_k=\displaystyle\sum_{i\geq 1}x_i^{k}.
\end{eqnarray*}

The classical Murnaghan-Nakayama rule gives an algorithm to compute the product $p_k\cdot s_{\lambda}$ in terms of Schur functions.
To state the rule, we need the notion of a \textit{border strip}. A skew shape $\mu/\lambda$ is called a border strip if it is connected and contains no $2\times 2$ array of boxes. The \textit{height} of a border strip $\mu/\lambda$, denoted by $ht(\mu/\lambda)$, is defined to be one less than the number of rows occupied by the border strip. This given, we have the following.
\begin{Theorem}[Murnaghan-Nakayama rule]
Given a positive integer $k$ and $\lambda\vdash n$, we have
\begin{eqnarray*}
p_{k}\cdot s_{\lambda}&=&\displaystyle\sum_{\mu\vdash |\lambda|+k}(-1)^{ht(\mu/\lambda)}s_{\mu},
\end{eqnarray*}
where the sum is over all partitions $\mu$ such that $\mu/\lambda$ is a border strip of size $k$.
\end{Theorem}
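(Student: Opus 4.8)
The plan is to prove the equivalent ``dual'' statement for the skewing operator $p_{k}^{\perp}$ and then transpose it through the Hall inner product $\langle\,,\,\rangle$ on $\Lambda$. Since $\{s_{\mu}:\mu\vdash m\}$ is an orthonormal basis of $\Lambda^{m}$, the asserted identity says exactly that the coefficient of $s_{\mu}$ in $p_{k}\cdot s_{\lambda}$, namely $\langle p_{k}s_{\lambda},s_{\mu}\rangle$, equals $(-1)^{ht(\mu/\lambda)}$ when $\mu/\lambda$ is a border strip of size $k$ and vanishes otherwise. As multiplication by $p_{k}$ is adjoint to $p_{k}^{\perp}$, we have $\langle p_{k}s_{\lambda},s_{\mu}\rangle=\langle s_{\lambda},p_{k}^{\perp}s_{\mu}\rangle$, so it suffices to show
\[
p_{k}^{\perp}s_{\mu}=\sum_{\lambda}(-1)^{ht(\mu/\lambda)}s_{\lambda},
\]
the sum being over all $\lambda$ for which $\mu/\lambda$ is a border strip of size $k$. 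Here I would rely on two classical facts: that $p_{k}^{\perp}$ is the derivation $k\,\partial/\partial p_{k}$ of $\Lambda=\mathbb{Q}[p_{1},p_{2},\ldots]$, and that $p_{k}^{\perp}(h_{d})=h_{d-k}$, with the convention $h_{d}=0$ for $d<0$ (a short computation in the power sum basis).

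First I would apply $p_{k}^{\perp}$ to the Jacobi--Trudi determinant $s_{\mu}=\det(h_{\mu_{i}-i+j})_{1\le i,j\le n}$, valid for any $n\ge l(\mu)$. Since $p_{k}^{\perp}$ is a derivation it acts on the determinant by the Leibniz rule applied row by row: the $i$-th row $(h_{\mu_{i}-i+j})_{j}$ is sent to $(h_{(\mu_{i}-k)-i+j})_{j}$, which is precisely the $i$-th row of the Jacobi--Trudi matrix attached to the integer sequence $\mu-k\varepsilon_{i}$ obtained from $\mu$ by subtracting $k$ from its $i$-th entry. This yields
\[
p_{k}^{\perp}s_{\mu}=\sum_{i=1}^{n}\det\big(h_{(\mu-k\varepsilon_{i})_{a}-a+b}\big)_{1\le a,b\le n},
\]
each summand being a Jacobi--Trudi-type determinant indexed by a sequence that need not be weakly decreasing or nonnegative. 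Such a determinant is alternating in the shifted entries $\alpha_{a}+n-a$, so each summand is $0$ when two shifted entries coincide or one is negative, and otherwise equals $(-1)^{t}s_{\nu}$, where $t$ is the number of adjacent transpositions needed to sort the shifted entries of $\mu-k\varepsilon_{i}$ into strictly decreasing order and $\nu$ is the resulting partition, which has $|\nu|=|\mu|-k$.

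The remaining and most delicate step is to identify the surviving terms with border strips. Recording $\mu$ by its shifted entries $\mu_{a}+n-a$ as bead positions on an abacus, the map $\mu\mapsto\mu-k\varepsilon_{i}$ slides the $i$-th bead back by $k$ positions, and the classical description of border strips via abaci (equivalently, via first-column hook lengths) tells us that this lands on an admissible configuration exactly when the resulting partition $\lambda$ is such that $\mu/\lambda$ is a border strip of size $k$; that distinct $i$ yield distinct such $\lambda$ and every such $\lambda$ arises; and that the number of beads passed over -- which equals the sorting number $t$ above -- is $ht(\mu/\lambda)$. Feeding $(-1)^{t}=(-1)^{ht(\mu/\lambda)}$ back into the previous display establishes the dual rule, and pairing with $s_{\lambda}$ gives the theorem. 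I expect the bulk of the work to be in this combinatorial bead-counting; one can avoid the abacus language by instead inducting on $n$ and expanding the Jacobi--Trudi determinant along its first row, matching the recursion with the evident recursive structure of border strips, but the bookkeeping is of the same order.
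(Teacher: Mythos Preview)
The paper does not prove this statement: it is the classical Murnaghan--Nakayama rule, stated in Section~2 as background and attributed to the references \cite{Murnaghan,Nakayama} (with the surrounding symmetric-function material deferred to \cite{macdonald-1,stanley-ec2,sagan}). There is therefore no ``paper's own proof'' to compare against.

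Your sketch is a correct and standard route to the classical rule: applying the derivation $p_{k}^{\perp}=k\,\partial/\partial p_{k}$ row-by-row to the Jacobi--Trudi determinant, then straightening each resulting determinant via the alternating property in the shifted row indices, is exactly the argument in Macdonald, \emph{Symmetric Functions and Hall Polynomials}, I.3, Example~11, and the identification of the surviving terms with border strips (and of the sorting length with the height) is the well-known abacus/first-column-hook-length correspondence you describe. The only caveat is that what you have written is a plan rather than a complete proof: the step you flag as ``the most delicate'' (that sliding a single bead back by $k$ positions corresponds bijectively to removing a border strip of size $k$, with the number of beads jumped equal to the height) still needs to be carried out in detail, but this is routine and correct as stated.
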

As an aid to understanding the theorem above, we will do an example.
\begin{Example}
Consider the computation of $p_3\cdot s_{(2,1)}$. First we need to find all partitions $\mu$ such that $\mu/(2,1)$ is a border strip of size $3$. All such partitions $\mu$ are listed below.
\begin{eqnarray*}
\ytableausetup{mathmode,boxsize=1em}
\begin{ytableau}
 \myb\\\myb\\ \myb\\\cw\\\cw &\cw
\end{ytableau}\hspace{5mm} 
\begin{ytableau}
\myb &\myb\\ \cw &\myb\\ \cw &\cw  
\end{ytableau}\hspace{5mm}
\begin{ytableau}
\cw & \myb &\myb\\\cw &\cw &\myb
\end{ytableau}\hspace{5mm}
\begin{ytableau}
\cw\\\cw & \cw &\myb &\myb &\myb
\end{ytableau}
\end{eqnarray*}
The statistic $ht(\mu/(2,1))$ for the partitions above from left to right is $2,1,1,0$ respectively. Hence, the Murnaghan-Nakayama rule implies that
$$p_3\cdot s_{(2,1)}=s_{(2,1,1,1,1)}-s_{(2,2,2)}-s_{(3,3)}+s_{(5,1)}.$$
\end{Example}

\section{Background on noncommutative symmetric functions}\label{sec:backgroundNsym}
\subsection{Compositions}
\begin{Definition}
 A \emph{composition} $\alpha$ is a finite ordered list of positive integers. The integers appearing in the list are called the \emph{parts} of the composition.
\end{Definition}
Given a composition $\alpha=(\alpha_1,\ldots,\alpha_k)$, the \textit{size} $\lvert \alpha\rvert$ is defined to be $\sum_{i=1}^{k}\alpha_i$. The number of parts of $\alpha$ is called the \textit{length}, and is denoted by $l(\alpha)$. If $\alpha$ is a composition satisfying $\lvert \alpha\rvert=n$, then we write it as $\alpha \vDash n$. By convention, there is a unique composition of size and length $0$, and we denote it by $\varnothing$.

We will associate a \textit{reverse composition diagram} to a composition as follows. Given a composition $\alpha=(\alpha_1,\ldots, \alpha_k)\vDash n$, the \textit{reverse composition diagram} of $\alpha$, also denoted by $\alpha$, is the left-justified array of $n$ boxes with $\alpha_i$ boxes in the $i$-th row. Here we follow the English convention, i.e. the rows are numbered from top to bottom, and the columns from left to right. Again, we refer to the box in the $i$-th row and $j$-th column by the ordered pair $(i,j)$.

Recall now the bijection between compositions of $n$ and subsets of $[n-1]$. Given a composition $\alpha=(\alpha_1,\ldots,\alpha_k)\vDash n$, we can associate a subset of $[n-1]$, called $\set (\alpha)$, by defining it to be $\{\alpha_1,\alpha_1+\alpha_2,\ldots,\alpha_1+\cdots +\alpha_{k-1}\}$. In the opposite direction, given a set $S=\{i_1< \cdots  <i_j\}\subseteq [n-1]$, we can associate a composition of $n$, called $\comp (S)$, by defining it to be $(i_1,i_2-i_1,\ldots, i_j-i_{j-1},n-i_j)$.

Finally, we define the refinement order on compositions. Given compositions $\alpha$ and $\beta$, we say that $\alpha \succcurlyeq \beta$ if one obtains parts of $\alpha$ in order by adding together adjacent parts of $\beta$ in order. The composition $\beta$ is said to be a \textit{refinement} of $\alpha$. 

\begin{Example}
Let $\alpha=(4,2,3)\vDash 9$. Then $\set(\alpha)=\{4,6\}\subseteq [8]$. Shown below is the reverse composition diagram of $\alpha$.
\begin{eqnarray*}
\ydiagram{4,2,3}
\end{eqnarray*}
Notice also that $(4,2,3)\succcurlyeq (3,1,2,1,2)$.
\end{Example}

\subsection{Semistandard reverse composition tableaux}
Let $\beta = (\beta_1,\ldots, \beta_l)$ and  $\alpha$ be compositions. Define a cover relation, $\lessdot_{c}$, on compositions in the following fashion.
\begin{eqnarray*}
\beta \lessdot_{c} \alpha \text{ iff }\left\lbrace \begin{array}{ll} \alpha= (1,\beta_1,\ldots,\beta_l) & \text{ or}\\ \alpha= (\beta_1,\ldots,\beta_k +1,\ldots,\beta_l) & \text{ and $\beta_i\neq \beta_k$ for all $i<k$.} \end{array}\right.
\end{eqnarray*}
The \emph{reverse composition poset} $\mathcal{L}_{c}$ is the poset on compositions where the partial order $<_{c}$ is obtained by taking the transitive closure of the cover relations above.
If $\beta <_{c} \alpha$, the \textit{skew reverse composition shape} $\alpha/\!\!/\beta$ is defined to be 
$$
\alpha/\!\!/\beta = \{(i,j): (i,j)\in \alpha, (i,j)\notin \beta \}.
$$
Here $\alpha$ is called the \textit{outer shape} and $\beta$ the \textit{inner shape}. If $\beta=\varnothing$, instead of writing $\alpha/\!\!/\varnothing$, we just write $\alpha$. 

If $\alpha/\!\!/\beta$ does not have two boxes belonging to the same column, then it is called a \textit{horizontal strip}, while if it does not have two boxes lying in the same row it is called a \textit{vertical strip}.

\begin{Definition}
A \emph{semistandard reverse composition tableau} (SSRCT) $\tau$ of shape $\alpha /\!\! /\beta$ is a filling 
\begin{eqnarray*}
\tau: \alpha/\!\!/ \beta \longrightarrow \mathbb{Z}^{+}
\end{eqnarray*}
that satisfies the following conditions
\begin{enumerate}
\item the rows are weakly decreasing from left to right
\item the entries in the first column are strictly increasing from top to bottom
\item if $i< j$ and $(j,k+1) \in \alpha/\!\! /\beta$ and either $(i,k)\in \beta$ or $\tau(i,k) \geq \tau(j,k+1)$ then either $(i,k+1)\in \beta$ or both $(i,k+1) \in \alpha /\!\!/ \beta $ and $\tau(i,k+1) > \tau(j,k+1)$.
\end{enumerate}
\end{Definition}
A \textit{standard reverse composition tableau} (SRCT) $\ctau$ of shape $\alpha\vDash n$ is an SSRCT that contains every positive integer in $[n]$ exactly once.

Throughout this article, when considering an SSRCT, the boxes of the inner shape will be filled with white, while that of the outer shape will be filled with blue. The ordered pairs of positive integers $(i,j)$ such that $(i,j)\notin \alpha$ and $(i,j)\notin \beta$ are assumed to be filled with $0$s. This given, the slightly technical third condition in the definition of SSRCT is equivalent to the non-existence of the following triple configurations in the filling $\tau$
\begin{eqnarray*}
\ytableausetup{mathmode,boxsize=1.5em}
\begin{ytableau}
\cw & \myb y\\ \none & \none[\vdots]\\\none & \myb z
\end{ytableau}
\text{ and } z>y\geq 0,
\end{eqnarray*}
and
\begin{eqnarray*}
\ytableausetup{mathmode,boxsize=1.5em}
\begin{ytableau}
\myb x & \myb y\\ \none & \none[\vdots]\\\none & \myb z
\end{ytableau}
\text{ and }x \geq z>y\geq 0. \\
\end{eqnarray*}
The existence of a configuration of the above types in a filling will be termed a \textit{triple rule violation}. We will refer to the entry in the box in position $(i,j)$ of an SSRCT $\tau$ by $\tau_{(i,j)}$.
\begin{Example}\label{exampleskewssrctstraightsrct}
An SSRCT of skew reverse composition shape $(2,5,4,2)/\!\!/ (1,2,2)$ (left) and an SRCT of reverse composition shape $(3,4,2,3)$ (right).
\begin{eqnarray*}
\ytableausetup{mathmode,boxsize=1em}
\begin{ytableau}
\myb 4 & \myb 3\\ \cw&\myb 7 &\myb 7 &\myb 5 &\myb1\\\cw & \cw &\myb 6 &\myb 2\\ \cw & \cw
\end{ytableau}\hspace{15mm}
\begin{ytableau}
\myb 5& \myb 4&\myb 2\\\myb 9&\myb 7&\myb 6&\myb 3\\\myb 10&\myb 1\\\myb 12&\myb 11&\myb 8
\end{ytableau}
\end{eqnarray*}
\end{Example}

One more notion that we will use is that of the \textit{descent set} of an SRCT. Given an SRCT $\ctau$ of shape $\alpha\vDash n$, its descent set, denoted by $\des (\ctau)$, is defined to be the set of all integers $i$ such that $i+1$ lies weakly to the right of $i$ in $\ctau$. Note that $\des (\ctau)$ is clearly a subset of $[n-1]$. The \textit{descent composition} of $\ctau$, denoted by $\comp (\ctau)$, is the composition of $n$ associated with $\des (\ctau)$. As an example, the descent set of the SRCT in Example \ref{exampleskewssrctstraightsrct} is $\{1,2,5,7,9,10\}$. Hence the associated descent composition is $(1,1,3,2,2,1,2)$.

\subsection{Noncommutative symmetric functions}\label{NSym}
An algebra closely related to $\Lambda$ is the algebra of \textit{noncommutative symmetric functions} $\mathbf{NSym}$, introduced in \cite{GKLLRT}. The algebra $\mathbf{NSym}$ is the free associative algebra $\mathbb{Q}\langle \nch_1,\nch_2 ,\ldots \rangle$ generated by a countably infinite number of indeterminates $\nch_k$ for $k\geq 1$. Assigning degree $k$  to $\nch_k$, and extending this multiplicatively allows us to endow $\mathbf{NSym}$ with a structure of a graded algebra. A natural basis for the degree $n$ graded component of $\mathbf{NSym}$, denoted by $\mathbf{NSym}^{n}$, is given by the \textit{noncommutative complete homogeneous symmetric functions}, $\{\mathbf{h}_{\alpha}=\mathbf{h}_{\alpha_1}\cdots \mathbf{h}_{\alpha_k}:\alpha=(\alpha_1,\ldots,\alpha_k)\vDash n\}$.
The link between $\Lambda$ and $\mathbf{NSym}$ is made manifest through the \textit{forgetful} map, $\chi: \mathbf{NSym}\to \Lambda$, defined by mapping $\nch_i$ to $h_i$ and extending multiplicatively. Thus, the images of elements of $\mathbf{NSym}$ under $\chi$ are elements of $\Lambda$, imparting credibility to the term noncommutative symmetric function.

$\mathbf{NSym}$ has another important basis called the noncommutative ribbon Schur basis. We will denote the \textit{noncommutative ribbon Schur function} indexed by a composition $\beta$ by $\rib_{\beta}$. The following  \cite[Proposition 4.13]{GKLLRT} can be taken as the definition of the noncommutative ribbon Schur functions.
\begin{eqnarray*}
\rib_{\beta}&=&\displaystyle\sum_{\alpha \succcurlyeq \beta}(-1)^{l(\beta)-l(\alpha)}\nch_{\alpha}
\end{eqnarray*}
A multiplication rule for noncommutative ribbon Schur functions was proved in \cite{GKLLRT}, and we will be needing it later.
\begin{Theorem}\cite[Proposition 3.13]{GKLLRT}\label{ribbonmultiplication}
Let $\alpha=(\alpha_1,\ldots,\alpha_{k_1})$ and $\beta=(\beta_1,\ldots,\beta_{k_2})$ be compositions. Define two new compositions $\gamma$ and $\mu$ as follows
\begin{eqnarray*}
\gamma=(\alpha_1,\ldots,\alpha_{k_1},\beta_1,\ldots,\beta_{k_2}),\hspace{2mm} \mu=(\alpha_1,\ldots,\alpha_{k_1}+\beta_1,\beta_2,\ldots,\beta_{k_2}).
\end{eqnarray*}
Then 
\begin{eqnarray*}
\rib_{\alpha}\cdot \rib_{\beta}=\rib_{\gamma}+\rib_{\mu}.
\end{eqnarray*}
\end{Theorem}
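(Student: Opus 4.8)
The plan is to expand both sides of the claimed identity in the $\nch$-basis, using the defining formula $\rib_{\beta}=\sum_{\delta\succcurlyeq\beta}(-1)^{l(\beta)-l(\delta)}\nch_{\delta}$ together with the obvious fact that $\nch_{\delta}\cdot\nch_{\epsilon}=\nch_{\delta\cdot\epsilon}$, where $\delta\cdot\epsilon$ denotes concatenation of compositions. We assume $k_1,k_2\geq 1$, as the statement implicitly requires. Writing $n=|\alpha|+|\beta|$ and $m=|\alpha|$, the left-hand side becomes $\sum(-1)^{l(\alpha)+l(\beta)-l(\delta)-l(\epsilon)}\nch_{\delta\cdot\epsilon}$ summed over all $\delta\succcurlyeq\alpha$ and $\epsilon\succcurlyeq\beta$, while the right-hand side is $\sum_{\eta\succcurlyeq\gamma}(-1)^{l(\gamma)-l(\eta)}\nch_{\eta}+\sum_{\eta\succcurlyeq\mu}(-1)^{l(\mu)-l(\eta)}\nch_{\eta}$. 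It therefore suffices to fix a composition $\eta\vDash n$ and verify that the coefficient of $\nch_{\eta}$ agrees on the two sides.

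First I would record the translation of the refinement order into set language: for compositions of equal size, $\delta\succcurlyeq\alpha$ iff $\set(\delta)\subseteq\set(\alpha)$, together with $\set(\gamma)=\set(\alpha)\sqcup\{m\}\sqcup(m+\set(\beta))$, $\set(\mu)=\set(\gamma)\setminus\{m\}$, and the length identities $l(\gamma)=l(\alpha)+l(\beta)$ and $l(\mu)=l(\gamma)-1$. The key structural observation is that $\nch_{\delta\cdot\epsilon}=\nch_{\eta}$ with $|\delta|=m$ forces $\delta,\epsilon$ to be the unique pair obtained by cutting $\eta$ at position $m$, which exists precisely when $m\in\set(\eta)$; in that case $\set(\delta)=\{s\in\set(\eta):s<m\}$, $\set(\epsilon)=\{s-m:s\in\set(\eta),\,s>m\}$, and $l(\delta)+l(\epsilon)=l(\eta)$.

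Then I would split into two cases. If $m\notin\set(\eta)$, the left-hand side contributes $0$ to the coefficient of $\nch_{\eta}$ (no admissible cut), while on the right-hand side $\eta\succcurlyeq\gamma$ iff $\eta\succcurlyeq\mu$, since $\set(\gamma)$ and $\set(\mu)$ differ only in the element $m$, which is not in $\set(\eta)$; hence the two summands either both vanish or contribute $(-1)^{l(\gamma)-l(\eta)}$ and $(-1)^{l(\mu)-l(\eta)}=-(-1)^{l(\gamma)-l(\eta)}$, which cancel. If $m\in\set(\eta)$, then using the descriptions of $\set(\delta),\set(\epsilon)$ above one checks that $\delta\succcurlyeq\alpha$ and $\epsilon\succcurlyeq\beta$ hold simultaneously iff $\set(\eta)\setminus\{m\}\subseteq\set(\mu)$, i.e. iff $\eta\succcurlyeq\gamma$; so the left-hand side contributes $[\eta\succcurlyeq\gamma](-1)^{l(\gamma)-l(\eta)}$. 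On the right-hand side $\eta\succcurlyeq\mu$ is now impossible, as $m\in\set(\eta)\setminus\set(\mu)$, so the right-hand side also equals $[\eta\succcurlyeq\gamma](-1)^{l(\gamma)-l(\eta)}$. In both cases the coefficients match, proving the theorem.

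The case analysis itself is routine; the one point requiring care is the sign and length bookkeeping — in particular that $l(\gamma)$ and $l(\mu)$ have opposite parities, which is exactly what makes the two ribbon terms on the right interact correctly with the single ``cut'' term on the left. The main obstacle, such as it is, is being scrupulous about the set-theoretic translation of $\succcurlyeq$ and the behavior at the cut position $m$; once those are pinned down, the identity falls out of the coefficient comparison. (An alternative route would be to work directly with ribbon diagrams and their concatenation/near-concatenation, but the $\nch$-expansion argument above is self-contained given what has been set up.)
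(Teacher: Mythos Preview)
Your argument is correct: expanding both sides in the $\nch$-basis and comparing coefficients via the set-theoretic description of refinement works exactly as you describe, and the sign/length bookkeeping is clean. Note, however, that the paper does not supply its own proof of this theorem --- it is quoted as \cite[Proposition 3.13]{GKLLRT} and used as a black box --- so there is nothing in the paper to compare your approach against; your self-contained verification is a reasonable substitute for the omitted proof.
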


We will also need the noncommutative analogue of the power sum symmetric functions. In \cite{GKLLRT}, they have actually defined two such analogues. Our interest is in the $\Psi$ basis. For a positive integer $n$, define
\begin{eqnarray}\label{def:noncommutativepowersum}
\Psi_n=\displaystyle\sum_{k=0}^{n-1}(-1)^k\rib_{(1^k,n-k)}.
\end{eqnarray}
This given, for a composition $\alpha=(\alpha_1,\ldots,\alpha_k)$, we define $\Psi_{\alpha}$ multiplicatively as $\Psi_{\alpha_1}\cdots\Psi_{\alpha_k}$. It is easy to show that $\chi(\Psi_n)=p_n$.

Now that we have defined the noncommutative analogue of power sum symmetric functions, to state a noncommutative Murnaghan-Nakayama rule, we need to define the analogue of the Schur functions in $\mathbf{NSym}$. The next subsection achieves that aim.
\subsection{Noncommutative Schur functions}
We will now describe a distinguished basis for $\mathbf{NSym}$, introduced in \cite{BLvW}, called the basis of \textit{noncommutative Schur functions}. They are naturally indexed by compositions, and the noncommutative Schur function indexed by a composition $\alpha$ will be denoted by $\ncs_{\alpha}$. They are defined implicitly using the relation
\begin{eqnarray}\label{def:implicitnoncommutativeschur}
\rib_{\beta}=\displaystyle\sum_{\alpha \vDash \vert\beta\vert} d_{\alpha,\beta}\ncs_{\alpha}
\end{eqnarray}
where $d_{\alpha,\beta}$ is the number of SRCTs of shape $\alpha$ and descent composition $\beta$.

The noncommutative Schur function $\ncsa$ satisfies the important property \cite[Equation 2.12]{BLvW}  that $$\chi(\ncsa)=s_{\widetilde{\alpha}}$$ where $\widetilde{\alpha}$ is the partition obtained by rearranging the parts of $\alpha$ in weakly decreasing order. Thus, the noncommutative Schur functions are lifts of the Schur functions in $\mathbf{NSym}$. They share many properties with the Schur functions. The interested reader should refer to \cite{BLvW, LMvW} for an in-depth study of these functions. Our main interest is in the noncommutative Pieri rules for the noncommutative Schur functions proved in \cite{BLvW} which we state below.

\begin{Theorem}\cite[Corollary 3.8]{BLvW}\label{thm: NoncommutativePieri}
Given a composition $\beta$, we have
\begin{eqnarray*}
\ncs_{(n)}\cdot \ncs_{\beta}=\sum_{\gamma}\ncsg
\end{eqnarray*} 
where the sum on the right runs over all compositions $\gamma >_{c} \beta$ such that $\vert\gamma/\!\!/\beta\vert = n$ and $\gamma/\!\!/\beta$ is a horizontal strip where the boxes have been added from left to right. Similarly,
\begin{eqnarray*}
\ncs_{(1^n)}\cdot \ncs_{\beta}=\sum_{\gamma}\ncsg
\end{eqnarray*} 
where the sum on the right runs over all compositions $\gamma >_{c} \beta$ such that $\vert\gamma/\!\!/\beta\vert = n$ and $\gamma/\!\!/\beta$ is a vertical strip where the boxes have been added from right to left.
\end{Theorem}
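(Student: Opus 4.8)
The plan is to derive both Pieri rules from the duality between $\mathbf{NSym}$ and the ring of quasisymmetric functions $\mathrm{QSym}$. These are dual graded Hopf algebras, with $\{\nch_\alpha\}$ dual to the monomial quasisymmetric functions $\{M_\alpha\}$, $\{\rib_\alpha\}$ dual to the fundamental quasisymmetric functions $\{F_\alpha\}$, and $\{\ncs_\alpha\}$ dual to the quasisymmetric Schur functions $\{\mathcal{S}_\alpha\}$ of \cite{HLMvW-1}; crucially, multiplication in $\mathbf{NSym}$ is adjoint to comultiplication in $\mathrm{QSym}$. I would first record the elementary identities $\ncs_{(n)}=\nch_n$ and $\ncs_{(1^n)}=\rib_{(1^n)}$: by \eqref{def:implicitnoncommutativeschur} each amounts to saying that $(n)$, resp.\ $(1^n)$, is the only composition of $n$ admitting a standard reverse composition tableau with empty descent set, resp.\ descent set $[n-1]$, and that it admits exactly one such. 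Both are immediate from the first-column condition, since a chain of $n$ entries each strictly left of (resp.\ weakly right of) its predecessor forces a single-row (resp.\ single-column) shape.

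Granting this, for any $\beta$ and $\gamma$ the coefficient of $\ncs_\gamma$ in $\ncs_{(n)}\cdot\ncs_\beta=\nch_n\cdot\ncs_\beta$ is $\langle \nch_n\otimes\ncs_\beta,\Delta\mathcal{S}_\gamma\rangle$. The second ingredient is the coproduct formula
\begin{eqnarray*}
\Delta\mathcal{S}_\gamma&=&\sum_{\delta\leq_c\gamma}\mathcal{S}_{\gamma/\!\!/\delta}\otimes\mathcal{S}_\delta,
\end{eqnarray*}
where $\mathcal{S}_{\gamma/\!\!/\delta}:=\sum_{\tau\in SSRCT(\gamma/\!\!/\delta)}x^\tau$ is the skew quasisymmetric Schur function and $\gamma/\!\!/\delta$ uses the canonical embedding of $\delta$ in $\gamma$ prescribed by $\mathcal{L}_c$. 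This is proved as one proves the coproduct rule for skew Schur functions: split the alphabet $\{x_1<x_2<\cdots\}$ into an initial segment and its complement, and check that in an SSRCT of shape $\gamma$ the cells bearing the large entries form an embedded copy of a straight shape $\delta\leq_c\gamma$ while the remaining cells form an SSRCT of shape $\gamma/\!\!/\delta$ — the one nontrivial point being a lemma that these restrictions really are legal SSRCT of the stated shapes. Pairing against $\nch_n\otimes\ncs_\beta$ and using $\langle\ncs_\beta,\mathcal{S}_\delta\rangle=\delta_{\beta\delta}$ collapses the structure constant to $\langle\nch_n,\mathcal{S}_{\gamma/\!\!/\beta}\rangle$, which is already zero unless $\gamma>_c\beta$ and $|\gamma/\!\!/\beta|=n$.

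Finally I would compute $\langle\nch_n,\mathcal{S}_{\gamma/\!\!/\beta}\rangle$ combinatorially. As $\nch_n$ is dual to $M_{(n)}$, it equals the coefficient of $M_{(n)}$ in $\mathcal{S}_{\gamma/\!\!/\beta}$, i.e.\ the number of SSRCT of shape $\gamma/\!\!/\beta$ with all entries equal to $1$; there is at most one. I claim this filling is legal exactly when $\gamma/\!\!/\beta$ is a horizontal strip whose boxes are added from left to right: first-column strictness forbids two boxes of $\gamma/\!\!/\beta$ in column $1$, and specializing the two forbidden triple configurations to a constant filling turns the triple rule precisely into the requirement that, reading down the strip, the occupied columns never move back to the left — the ``added from left to right'' condition — while conversely such a strip produces no triple-rule violation. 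This proves the first identity. For the second, pair instead against $\ncs_{(1^n)}=\rib_{(1^n)}$, which is dual to $F_{(1^n)}$; the coefficient of $\ncs_\gamma$ in $\ncs_{(1^n)}\cdot\ncs_\beta$ is then the coefficient of $F_{(1^n)}$ in the fundamental expansion $\mathcal{S}_{\gamma/\!\!/\beta}=\sum_T F_{\comp(T)}$ over standard skew reverse composition tableaux, i.e.\ the number of SRCT of shape $\gamma/\!\!/\beta$ with descent set $[n-1]$, and the same kind of triple-rule analysis shows this is $1$ exactly when $\gamma/\!\!/\beta$ is a vertical strip with boxes added from right to left, and $0$ otherwise.

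The main obstacle is the combinatorics of the last paragraph: disentangling the triple rule and first-column strictness to pin down exactly when the constant (resp.\ full-descent standard) filling of $\gamma/\!\!/\beta$ exists. A secondary technical point is the restriction lemma behind the coproduct formula, where the rigidity of $\lessdot_c$ is needed to see that the large entries of an SSRCT occupy an embedded lower shape of $\mathcal{L}_c$ rather than an arbitrary sub-diagram. A route avoiding $\mathrm{QSym}$ altogether would expand $\ncs_\beta$ in the ribbon basis, multiply by $\rib_{(n)}$ (resp.\ $\rib_{(1^n)}$) using Theorem \ref{ribbonmultiplication}, and re-expand in the noncommutative Schur basis; but controlling the resulting signed sums of ribbons looks strictly more delicate than the duality argument sketched above.
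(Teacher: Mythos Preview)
The paper does not prove this theorem at all: it is quoted verbatim as \cite[Corollary 3.8]{BLvW} and used as a black box. There is therefore no ``paper's own proof'' to compare against.

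That said, your outline is essentially the argument of the original reference \cite{BLvW}: one uses the duality pairing between $\mathbf{NSym}$ and $\mathrm{QSym}$ under which the noncommutative Schur basis is dual to the quasisymmetric Schur basis, combined with the coproduct (skew) expansion of quasisymmetric Schur functions, to reduce the Pieri coefficients to a count of SSRCT of skew shape with a single repeated entry (or, dually, SRCT with full descent set). Your sketch is correct in spirit, though a few points deserve care. First, the identity $\ncs_{(n)}=\nch_n$ is immediate since $\nch_n=\rib_{(n)}$ and the paper itself records $\ncs_{(n)}=\rib_{(n)}$ in Lemma~\ref{ribbon-ncschur-equality-for-row-and-column}. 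Second, the coproduct formula you invoke is genuinely the technical heart and is established in \cite{BLvW}; you should not present it as a routine alphabet-splitting, since verifying that the restriction of an SSRCT to large entries yields a straight shape embedded via $\mathcal{L}_c$ is exactly the substantive lemma. Third, your combinatorial characterization of when the constant filling is legal is slightly imprecise: the ``added from left to right'' condition in the statement refers to the existence of a saturated chain in $\mathcal{L}_c$ from $\beta$ to $\gamma$ adding boxes in weakly increasing columns, not merely to a left-to-right reading of the strip; you would need to check these are equivalent, which they are but requires an argument about how $\lessdot_c$ interacts with column indices.
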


Next, we will establish an easy equality that will prove useful later.
\begin{Lemma}\label{ribbon-ncschur-equality-for-row-and-column}
Let $n\geq 0$. Then we have that $\ncs _{(n)}=\rib_{(n)}$ and $\ncs _{(1^n)} =\rib_{(1^n)}$.
\end{Lemma}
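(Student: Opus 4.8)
The plan is to prove both identities directly from the definitions, using the implicit relation \eqref{def:implicitnoncommutativeschur} together with a combinatorial count of the relevant SRCTs. Recall that $\ncs_\alpha$ is defined via $\rib_\beta = \sum_{\alpha \vDash |\beta|} d_{\alpha,\beta}\ncs_\alpha$, where $d_{\alpha,\beta}$ counts SRCTs of shape $\alpha$ with descent composition $\beta$. To identify $\ncs_{(n)}$ I would first observe that, by triangularity of the transition matrix $(d_{\alpha,\beta})$ with respect to a suitable order on compositions, the $\ncs_\alpha$ are indeed a basis and the relation can be inverted; but a cleaner route is to exhibit the SRCTs explicitly. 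The shape $(n)$ is a single row, so there is a unique SRCT of shape $(n)$: the entries must weakly decrease along the row and must be a permutation of $[n]$, forcing the filling $n, n-1, \ldots, 2, 1$ read left to right. Its descent set is empty, so its descent composition is $(n)$. Hence $d_{(n),\beta} = \delta_{\beta,(n)}$, i.e. the only $\beta$ with $d_{(n),\beta}\neq 0$ is $\beta = (n)$.

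Next I would run the inversion. Write $\rib_{(n)} = \sum_{\alpha} d_{\alpha,(n)}\ncs_\alpha$. We need to know, for each composition $\alpha \vDash n$, whether there is an SRCT of shape $\alpha$ with empty descent set. An SRCT has empty descent set precisely when, for every $i$, the entry $i+1$ lies strictly to the left of $i$; since the first column is strictly increasing top to bottom and rows weakly decrease, one checks this forces $\alpha$ to be a single row: if $\alpha$ had more than one row, the entry $1$ and the entry $2$ cannot both be placed consistently with "$2$ strictly left of $1$" while respecting column-strictness and the triple rule. (I would verify this with a short case analysis on where $1$ and $2$ sit.) Therefore $d_{\alpha,(n)} = \delta_{\alpha,(n)}$, and \eqref{def:implicitnoncommutativeschur} collapses to $\rib_{(n)} = \ncs_{(n)}$. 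The argument for the column case is dual: there is a unique SRCT of shape $(1^n)$, namely the first column filled $1,2,\ldots,n$ from top to bottom, whose descent set is $\{1,2,\ldots,n-1\}$, giving descent composition $(1^n)$; and conversely an SRCT whose descent composition is $(1^n)$ must have a full descent set, forcing every $i+1$ weakly right of $i$, which (together with the structural constraints) forces the shape $(1^n)$. So $\rib_{(1^n)} = \ncs_{(1^n)}$. The cases $n=0$ are trivial since both sides equal the empty product.

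The main obstacle is the converse direction in each case — showing that no composition $\alpha$ other than $(n)$ (resp. $(1^n)$) admits an SRCT with the prescribed descent composition. The forward direction (uniqueness of the SRCT on the row, resp. column, and computing its descent set) is immediate, but ruling out all other shapes requires care with the triple rule; the key leverage is that the first column is strictly increasing, so in a shape with $\ell \geq 2$ rows the value $1$ must occupy the bottom-left box, which then constrains where $2$ can go and propagates a contradiction with either the descent condition or condition (3) of the SSRCT definition. An alternative, perhaps slicker, approach avoiding this case analysis: apply the noncommutative Pieri rule (Theorem \ref{thm: NoncommutativePieri}) with $\beta = \varnothing$ to get $\ncs_{(n)} = \sum_\gamma \ncs_\gamma$ over $\gamma >_c \varnothing$ with $|\gamma| = n$ and $\gamma$ a horizontal strip added left to right — but the only such $\gamma$ built from $\varnothing$ by a single horizontal strip of size $n$ is $\gamma = (n)$ itself, so in fact $\ncs_{(n)}$ is just the single basis element, and one then matches this against the known expansion $\rib_{(n)} = \ncs_{(n)}$ coming from the unique no-descent SRCT; similarly $\ncs_{(1^n)} = \rib_{(1^n)}$ via the vertical-strip Pieri rule. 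I would present the direct SRCT argument as the main proof and perhaps remark on the Pieri shortcut.
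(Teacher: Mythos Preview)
Your main argument is correct and follows exactly the paper's approach: both use the defining relation \eqref{def:implicitnoncommutativeschur} and reduce to the identity $d_{\alpha,(n)}=\delta_{\alpha,(n)}$ (and likewise for $(1^n)$); the paper simply asserts this Kronecker delta fact, while you supply the combinatorial justification. One caveat: your proposed Pieri ``shortcut'' is circular --- applying Theorem~\ref{thm: NoncommutativePieri} with $\beta=\varnothing$ yields only the tautology $\ncs_{(n)}\cdot 1=\ncs_{(n)}$, and you then still appeal to the SRCT count to reach $\rib_{(n)}$, so it does not give an independent proof and should be dropped.
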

\begin{proof}
By \eqref{def:implicitnoncommutativeschur}, we have
\begin{eqnarray}
\rib _{\beta}=\sum_{\alpha \vDash n}d_{\alpha\beta}\ncsa .
\end{eqnarray}
Now, our claim follows once we note that if $\beta = (n)$ or $\beta = (1^n)$ then $d_{\alpha\beta}=\delta _{\alpha\beta}$. Here $\delta_{\alpha\beta}$ denotes the Kronecker delta function that equals $1$ if $\alpha=\beta$ and $0$ otherwise.
\end{proof}\label{eqRibbonSchur}

\subsection{Box-adding operators \texorpdfstring{$\upl_i$}{upleft}}
In this section, we will define box-adding operators that act on compositions and add a box to a composition in accordance with the Pieri rules stated in Theorem \ref{thm: NoncommutativePieri}.
\begin{Definition}
For every positive integer $i$, given a composition $\alpha$, $\upl_{i}(\alpha)$ is defined in the following manner.
$$
\upl_{i}(\alpha)= \left\lbrace\begin{array}{lll} \beta &  \text{ if $\alpha \lessdot_{c} \beta$ and $\beta/\!\! /\alpha$ is a box }\\\text{} & \text{ in the $i$-th column}\\ 0 & \text{otherwise.}  \end{array}\right.
$$
\end{Definition}
It is clear from the above definition that $\upl_i(\alpha)$ is non-zero if and only if $\alpha$ has a part equal to $i-1$.
\begin{Example}
Let $\alpha = (2,1,3,1)$. Then $\upl_{1}(\alpha)=(1,2,1,3,1)$, $\upl_{2}(\alpha)=(2,2,3,1)$ and $\upl_{5}(\alpha)=0$.
\end{Example}

It can be easily seen that the box-adding operators satisfy the following relation.
\begin{Lemma}\label{modifiedlocalplactic}
Given positive integers $i,j$ such that $\vert i-j\vert\geq 2$, we have the following equality as operators on compositions.
\begin{eqnarray*}
\upl_i\upl_j&=&\upl_j\upl_i
\end{eqnarray*}
\end{Lemma}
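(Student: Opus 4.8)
The plan is to prove the commutation relation $\upl_i\upl_j = \upl_j\upl_i$ for $|i-j| \geq 2$ by a direct combinatorial analysis of how the operators act on an arbitrary composition $\alpha$. Recall from the remark after the definition that $\upl_k(\alpha) \neq 0$ if and only if $\alpha$ has a part equal to $k-1$; when it is nonzero, $\upl_k$ acts by selecting a specific part equal to $k-1$ and incrementing it (or, in the $k=1$ case, by prepending a part equal to $1$). The key observation is that the cover relation $\lessdot_c$ specifies \emph{which} part equal to $k-1$ gets incremented: by the second clause of the definition of $\lessdot_c$, it is the \emph{topmost} (first) part equal to $k-1$ for which no earlier part already equals $k$ — equivalently, after accounting for the constraint, a uniquely determined row. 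So each $\upl_k$ is a deterministic partial operation, and I would pin down exactly which row it modifies.

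First I would set up notation: fix a composition $\alpha = (\alpha_1, \ldots, \alpha_\ell)$ and, assuming without loss of generality that $i < j$ (so $j \geq i+2$), track the rows affected. The operator $\upl_j$ looks for a part equal to $j-1$; the operator $\upl_i$ looks for a part equal to $i-1$. Since $j - 1 \geq i + 1 > i > i - 1$, the \emph{values} being sought are distinct, and moreover incrementing a part from $i-1$ to $i$ cannot create or destroy a part equal to $j-1$ (because $i \neq j-1$ and $i \neq j$, using $|i-j|\geq 2$), and symmetrically incrementing a part from $j-1$ to $j$ cannot affect the set of parts equal to $i-1$ (since $j \neq i-1$ and $j-1 \neq i-1$). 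I would verify the $i=1$ or $j=1$ boundary cases separately but they are even easier: prepending a part $1$ commutes with incrementing an interior part equal to $j-1 \geq 2$, since the prepended $1$ is neither equal to $j-1$ nor to $j$ and sits above everything.

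The heart of the argument is then checking that the ``selection rule'' is unaffected by the other operator's action. Concretely: let $r$ be the row that $\upl_i$ would modify in $\alpha$ and $r'$ the row $\upl_j$ would modify. I claim $\upl_i$ still modifies row $r$ after $\upl_j$ has acted, and vice versa. For $\upl_i$ acting on $\upl_j(\alpha)$: the candidate rows (those with part $i-1$) are unchanged as a set, and the obstruction condition — ``no earlier part equals $i$'' — involves only parts equal to $i$, which $\upl_j$ neither creates nor destroys since the only value it changes is $j-1 \mapsto j$ and $j \neq i$, $j - 1 \neq i$. Hence row $r$ is still selected. The symmetric statement for $\upl_j$ acting on $\upl_i(\alpha)$ is identical. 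Since $r \neq r'$ is automatic (the modified parts have different values $i-1 \neq j-1$, so they can't be the same row — unless one operator is the prepend operator, handled above), the two modifications are to disjoint rows and the resulting composition is the same regardless of order. The case where one side is $0$ also matches: $\upl_j(\alpha) = 0$ iff $\alpha$ has no part $j-1$, and this property is preserved by $\upl_i$, so $\upl_i\upl_j(\alpha) = 0$ iff $\upl_j\upl_i(\alpha) = 0$.

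The main obstacle I anticipate is being careful with the interaction between the two clauses of $\lessdot_c$ — in particular the subtle point that the ``no earlier equal part'' condition could in principle be disturbed if incrementing one part produced a new part of the relevant value. The whole argument hinges on $|i - j| \geq 2$ precisely to rule this out: the dangerous coincidences would be $i = j - 1$ (incrementing an $(i-1)$-part to $i$ when $\upl_j$ wants to act on that value) or $j - 1 = i - 1$, i.e. $i = j$. With the gap condition these are excluded, so I would make sure the write-up isolates exactly this point rather than burying it. I expect the clean way to present it is: (1) observe both operators are deterministic, identifying the row each modifies; (2) show the set of ``part equal to $v$'' rows, for the relevant values $v \in \{i-1, i, j-1, j\}$, is stable under the other operator; (3) conclude the selected rows and the obstruction conditions are unchanged, so the two box-additions happen in disjoint rows in either order.
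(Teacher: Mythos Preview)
Your approach matches the paper's: reduce to $i<j$, handle $i=1$ separately (prepending a $1$ commutes with incrementing a part of value $j-1\geq 2$), and for $i\geq 2$ observe that $\upl_j$ only changes a part from value $j-1$ to $j$, neither of which equals $i-1$, so the leftmost part equal to $i-1$ is undisturbed (and symmetrically for $\upl_i$'s effect on parts equal to $j-1$). One small correction: the condition in the second clause of $\lessdot_c$ is $\beta_m\neq\beta_k$ for all $m<k$, i.e.\ the incremented part is the \emph{leftmost} occurrence of the value $i-1$ --- there is no separate ``no earlier part equals $i$'' obstruction, so your step~(2) check that the set of $(i-1)$-parts is preserved already suffices and the extra check on $i$-parts is unnecessary (though harmless).
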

\begin{proof}
We will assume, without loss of generality that $i<j$. Consider first the case where $i=1$ and $j\geq 3$.  If $\alpha = (\alpha_1,\ldots,\alpha_k)$ is a composition, and no part of $\alpha$ equals $j{-}1$, then we have $\upl_1\upl_j (\alpha)=\upl_j\upl_1(\alpha)=0$. If, on the other hand, there is a part of $\alpha$ that equals $j{-}1$, pick the leftmost such part. Let this part be $\alpha_r$. Then $\upl_1\upl_j(\alpha)=(1,\alpha_1,\ldots,\alpha_r{+}1,\ldots,\alpha_k)=\upl_j\upl_1(\alpha)$.

Now consider the case where $i\geq 2$. Clearly, if either $i{-}1$ or $j{-}1$ is not a part of $\alpha$, we have $\upl_i\upl_j(\alpha)=\upl_j\upl_i(\alpha)=0$. If both $i{-}1$ and $j{-}1$ are parts of $\alpha$,  let $\alpha_r$ and $\alpha_s$ be their leftmost occurrences in $\alpha$ respectively. It is easily seen that the application of either $\upl_i$ first or $\upl_j$ first doesn't change the position of the leftmost instance of a part equalling $j{-}1$ or $i{-}1$ in $\alpha$ respectively. Hence $\upl_i\upl_j=\upl_j\upl_i$ for $|i{-}j|\geq 2$.
\end{proof}

\subsection{SRCTs associated with sequence of box-adding operators}\label{maximalchainbijection}
Let $\alpha$ be a composition and suppose $w=\upl_{i_1}\cdots \upl_{i_n}$ is such that $w(\alpha)=\beta$, where $\beta$ is not $0$. Then, by \cite[Proposition 2.11]{BLvW}, we see that there is a unique SRCT of shape $\beta/\!\!/\alpha$ associated with the word $w$, as we can associate the following maximal chain in $\mathcal{L}_{c}$ with $w$
\begin{eqnarray*}
\alpha \lessdot_{c} \upl_{i_n}(\alpha)\lessdot_{c} \cdots \lessdot_{c} \upl_{i_2}\cdots\upl_{i_n}(\alpha)\lessdot_{c}\upl_{i_1}\cdots\upl_{i_n}(\alpha)=\beta.
\end{eqnarray*}
This maximal chain in turn gives rise to a unique SRCT of size $n$ and shape $\beta/\!\!/\alpha$, where $n-i+1$ is placed in the $i$-th box added.
\begin{Example}
Let $\alpha=(1,3,2)$ and $\beta=\upl_{2}\upl_{4}\upl_{1}\upl_{2}\upl_3(\alpha)=(2,2,4,3)$. Then $w=\upl_2\upl_{4}\upl_{1}\upl_{2}\upl_3$ uniquely corresponds to the following SSRCT of shape $\beta/\!\!/\alpha$.
\begin{eqnarray*}
\ytableausetup{mathmode,boxsize=1em}
\begin{ytableau}
\myb 3 & \myb 1\\\cw & \myb 4\\ \cw &\cw &\cw & \myb 2\\ \cw &\cw & \myb 5
\end{ytableau}
\end{eqnarray*}
\end{Example}

\subsection{Pieri rules using box-adding operators}
Let $\mathbb{C}Comp$ denote the vector space consisting of formal sums of compositions. Consider the map $\Phi : \mathbf{NSym} \longrightarrow \mathbb{C}Comp$ defined by sending
$\ncs_{\alpha} \mapsto \alpha$
and extending linearly. We give $\mathbb{C}Comp$ an algebra structure by defining the product between two composition $\alpha\cdot \beta$ as follows.
\begin{eqnarray*}
\alpha\cdot \beta&=&\Phi(\Phi^{-1}(\alpha)\cdot \Phi^{-1}(\beta))
\end{eqnarray*}
It is easily seen that with the above definition of $\Phi$, we have an $\mathbb{C}$-algebra isomorphism between $\mathbf{NSym}$ and $\mathbb{C}Comp$. We will freely use the notation $\alpha\cdot\beta$ to denote the product $\ncsa\cdot \ncsb$, and for the sake of convenience, we will not distinguish between $\ncsa\cdot \ncsb$ and $\Phi(\ncsa\cdot \ncsb)$.

With the above setup and armed with the definition of box-adding operators, we restate Theorem \ref{thm: NoncommutativePieri} in the language of box-adding operators.
\begin{Proposition}\label{lpr}
Let $\alpha$ be a composition and $n$ be a positive integer. Then
\begin{eqnarray*}
(n)\cdot\alpha&=& \left(\displaystyle\sum_{i_n>\cdots>i_1}\upl_{i_n}\cdots\upl_{i_1}\right)(\alpha),\\
(1^n)\cdot\alpha&=&\left(\displaystyle\sum_{i_n\leq \cdots\leq i_1}\upl_{i_n}\cdots\upl_{i_1}\right)(\alpha).
\end{eqnarray*}
\end{Proposition}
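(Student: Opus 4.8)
The plan is to translate the noncommutative Pieri rules of Theorem \ref{thm: NoncommutativePieri} into the box-adding language by unpacking how a horizontal (resp.\ vertical) strip is built up one box at a time. First I would recall from Lemma \ref{ribbon-ncschur-equality-for-row-and-column} that $\ncs_{(n)}=\rib_{(n)}$ and $\ncs_{(1^n)}=\rib_{(1^n)}$, so that in $\mathbb{C}Comp$ the product $(n)\cdot\alpha$ is exactly $\ncs_{(n)}\cdot\ncs_\alpha=\sum_\gamma \gamma$, the sum over $\gamma>_c\alpha$ with $\gamma/\!\!/\alpha$ a horizontal strip of size $n$ added left to right, and similarly for $(1^n)\cdot\alpha$ with vertical strips added right to left.

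Next I would analyze the operator side. A word $\upl_{i_n}\cdots\upl_{i_1}$ applied to $\alpha$ either yields $0$ or, by the discussion in Section \ref{maximalchainbijection}, corresponds to a unique maximal chain $\alpha\lessdot_c\upl_{i_1}(\alpha)\lessdot_c\cdots\lessdot_c\beta$, i.e.\ a way of adding $n$ boxes one at a time, the $k$-th box added lying in column $i_k$. So $\bigl(\sum_{i_n>\cdots>i_1}\upl_{i_n}\cdots\upl_{i_1}\bigr)(\alpha)$ is the sum, with multiplicity, of all compositions $\beta$ reachable from $\alpha$ by a chain of $n$ box-additions in strictly increasing column order, and the $(1^n)$ case is the analogous sum with weakly decreasing column order. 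The heart of the proof is then a bijective/enumerative claim: the compositions $\beta$ that arise this way are precisely those occurring in the corresponding Pieri rule, each with multiplicity one. For the row case, adding boxes in strictly increasing columns forces no two added boxes to share a column, which is exactly the horizontal-strip condition; I must also check that "strictly increasing column order" matches the phrase "boxes added from left to right" in Theorem \ref{thm: NoncommutativePieri}, and that each such $\beta$ is hit exactly once (uniqueness of the order of additions given the strip). For the column case, adding boxes in weakly decreasing columns forces no two in the same row (since two boxes in the same row of a composition lie in consecutive columns, and the left one cannot be added before being able to... — more precisely, one uses that $\upl_i$ is nonzero only when a part equals $i-1$, so a box in column $i$ in a given row can only be added after the box in column $i-1$ of that row, contradicting weakly decreasing order if both are in the added strip); this matches the vertical-strip condition with "boxes added from right to left," again with multiplicity one.

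Concretely I would structure the argument as: (1) fix $\beta$ with $\beta/\!\!/\alpha$ a horizontal strip of size $n$ added left to right, and produce the unique sequence $i_1<\cdots<i_n$ of columns of its boxes, read in left-to-right order, verifying by induction on $n$ that $\upl_{i_n}\cdots\upl_{i_1}(\alpha)=\beta$ and that the intermediate shapes are legal (no triple rule violations), using the horizontal-strip hypothesis to guarantee each $\upl_{i_k}$ is applied to a composition having a part equal to $i_k-1$; (2) conversely, given any $i_1<\cdots<i_n$ with $\upl_{i_n}\cdots\upl_{i_1}(\alpha)=\beta\neq 0$, argue that $\beta/\!\!/\alpha$ is a horizontal strip added left to right, so that $\beta$ lies in the Pieri sum; (3) observe both directions give a bijection, hence the multiplicities are $1$ on both sides and the two expressions agree; (4) run the mirror-image argument for $(1^n)$, where the roles of increasing/decreasing and left/right are swapped. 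The main obstacle I anticipate is step (1)–(2): carefully matching the combinatorial phrase "horizontal strip added from left to right" (which encodes a legality condition on the order of box additions, i.e.\ that the resulting filling by $n,n-1,\ldots,1$ — or by $1,2,\ldots,n$ — is an SSRCT) with the purely operator-theoretic condition $i_1<\cdots<i_n$, and in particular showing no spurious triple rule violations arise in the intermediate steps. Everything else — extracting the column sequence, the inductive bookkeeping — should be routine once this correspondence is pinned down, and one can lean on \cite[Proposition 2.11]{BLvW} for the chain/SRCT dictionary rather than re-proving it.
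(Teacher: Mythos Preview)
Your plan is correct, but you should know that the paper gives no proof of this proposition at all: it is introduced with the sentence ``we restate Theorem~\ref{thm: NoncommutativePieri} in the language of box-adding operators'' and left at that. The intended reading is that the phrases ``horizontal strip where the boxes have been added from left to right'' and ``vertical strip where the boxes have been added from right to left'' in Theorem~\ref{thm: NoncommutativePieri} are already statements about maximal chains in $\mathcal{L}_c$, and a chain whose successive box-additions occur in columns $i_1,i_2,\ldots,i_n$ is literally the same datum as the operator word $\upl_{i_n}\cdots\upl_{i_1}$ by the dictionary of Section~\ref{maximalchainbijection}. Under that dictionary, ``left to right with distinct columns'' is $i_1<\cdots<i_n$ and ``right to left'' is $i_1\geq\cdots\geq i_n$, and uniqueness of the sequence given the multiset of columns handles multiplicity.

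Your steps (1)--(4) carry this out explicitly and are fine. The one place you are over-worrying is the ``spurious triple rule violations'' in intermediate shapes: the operators $\upl_i$ are defined purely via the cover relation $\lessdot_c$ on compositions, not via fillings, so there is nothing to check at each step beyond $\upl_i(\cdot)\neq 0$. The SRCT interpretation only enters through \cite[Proposition 2.11]{BLvW}, which you already cite, and which guarantees that any maximal chain automatically corresponds to a valid SRCT. So your anticipated obstacle dissolves once you keep the operator and tableau viewpoints separate.
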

We would like to compute $\rib_{(1^k,n-k)}\cdot \ncsa$ using box-adding operators. Note that we already know the answer in the special cases $k=0$ or $k=n-1$, as these are precisely the cases encompassed by Proposition \ref{lpr}. To this end, we will need some more notation, which is precisely what the aim of the next subsection is.

\subsection{Reverse hookwords and multiplication by \texorpdfstring{$\rib_{(1^k,n-k)}$}{rib}}
Given positive integer $n$ and $k$ such that $0\leq k\leq n{-}1$, define $w=\upl_{i_1}\ldots \upl_{i_n}$ to be a \textit{reverse $k$-hookword} if $i_1\leq i_2\leq \cdots \leq i_{k+1} >i_{k+2}>\cdots > i_{n}$. If $w$ is a reverse $k$-hookword for some $k$, then we will call $w$ a \textit{reverse hookword}. 
Denote by $\supp (w)$ the set formed by the indices $i_1,\ldots, i_n$ (by discarding duplicates). Call $w$ \textit{connected} if $\supp(w)$ is an interval in $\mathbb{Z}_{\geq 1}$, otherwise call it \textit{disconnected}. Put differently, $w$ is connected if the set of indices, when considered in ascending/descending order, form a contiguous block of positive integers. Let
\begin{eqnarray*}
&RHW_n = \text{set of reverse hookwords of length }n,\\
&CRHW_n = \text{set of connected reverse hookwords of length }n.
\end{eqnarray*}
Let $RHW_{n,k}$ denote that subset of $RHW_n$ consisting of reverse $k$-hookwords. 

For the same $w$ as before, we define the \emph{content vector} of $w$ to be a finite ordered list of non-negative integers $\alpha= (\alpha_1,\alpha_2,\ldots)$ where $\alpha_{i}$ denotes the number of times the operator $\upl_{i}$ appears in $w$.  Also associated with $w$ are the notions of $\arm(w)$ and $\leg(w)$ defined as follows.
\begin{eqnarray*}
&\arm(w)&=\{i_j: 1\leq j\leq k+1\}\nonumber\\&\leg(w)&=\{i_j: k+1\leq j\leq n\}
\end{eqnarray*}
Finally, define $\asc(w)$ to be  $\vert \arm(w)\vert-1$.

\begin{Example}
Let $w=\upl_2\upl_5\upl_6\upl_7\upl_8\upl_8\upl_9\upl_9\upl_7\upl_6\upl_4\upl_3$. Then $w$ is a connected reverse $7$-hookword. The content vector of $w$ is given by $(0,1,1,1,1,2,2,2,2)$. Furthermore, we also have that
\begin{eqnarray*}
&\arm(w)&=\{9,9,8,8,7,6,5,2\},\\
&\leg(w)&=\{3,4,6,7,9\}.
\end{eqnarray*}
\end{Example}

Now, using reverse hookwords, we can express the multiplication of noncommutative Schur functions by noncommutative ribbon Schur functions in terms of box-adding operators. 
\begin{Lemma}\label{actionribbon}
Let $\beta=(1^k,n-k)$ be a composition where $0\leq k\leq n-1$, and $\alpha$ be a composition. Then
\begin{eqnarray*}\rib_\beta\cdot \ncsa=\left( \displaystyle\sum_{w\in RHW_{n,k}} w\right) (\alpha).\end{eqnarray*}
\end{Lemma}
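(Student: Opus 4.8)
The plan is to induct on $k$, using Theorem \ref{ribbonmultiplication} to peel off a box-column factor and reduce to the two extreme cases already handled by Proposition \ref{lpr}. Concretely, the defining formula \eqref{def:noncommutativepowersum}-style identity of Theorem \ref{ribbonmultiplication} gives $\rib_{(1)}\cdot\rib_{(1^{k-1},n-k)} = \rib_{(1^k,n-k)} + \rib_{(1^{k-1},n-k+1)}$, so that
\begin{eqnarray*}
\rib_{(1^k,n-k)} = \rib_{(1)}\cdot\rib_{(1^{k-1},n-k)} - \rib_{(1^{k-1},n-k+1)}.
\end{eqnarray*}
Hitting $\ncsa$ with this, applying $\rib_{(1)} = \ncs_{(1)}$ (Lemma \ref{ribbon-ncschur-equality-for-row-and-column}) on the left via Proposition \ref{lpr}, and invoking the inductive hypothesis on the two length-$n-1$ and length-$n$ ribbons $(1^{k-1},n-k)$ and $(1^{k-1},n-k+1)$, one expresses $\rib_{(1^k,n-k)}\cdot\ncsa$ as a signed sum of $\upl_j\cdot\bigl(\sum_{w\in RHW_{n-1,k-1}}w\bigr)(\alpha)$ terms minus $\bigl(\sum_{w\in RHW_{n,k-1}}w\bigr)(\alpha)$. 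The base cases $k=0$ and $k=n-1$ are exactly Proposition \ref{lpr} together with Lemma \ref{ribbon-ncschur-equality-for-row-and-column}.

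The combinatorial heart of the argument is then a sign-reversing cancellation on words. Prepending $\upl_j$ to a reverse $(k-1)$-hookword $\upl_{i_1}\cdots\upl_{i_{n-1}}$ (with $i_1\leq\cdots\leq i_k > i_{k+1}>\cdots>i_{n-1}$) produces a length-$n$ word $\upl_j\upl_{i_1}\cdots\upl_{i_{n-1}}$: if $j\leq i_1$ this is a reverse $k$-hookword, and these are precisely the elements of $RHW_{n,k}$; if $j > i_1$ the resulting word is \emph{not} a hookword (it has a descent $j>i_1$ immediately followed by the ascending run $i_1\leq i_2\leq\cdots$), and I claim that as operators on compositions such a word equals a word obtained by prepending $\upl_{j'}$ to a reverse $(k-1)$-hookword of length $n-1$ — i.e. it matches, with opposite sign, a term coming from $\rib_{(1^{k-1},n-k+1)}\cdot\ncsa$. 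Making this matching precise is where I expect the real work to lie: one must use Lemma \ref{modifiedlocalplactic} (commuting $\upl_i\upl_j$ when $|i-j|\geq 2$) and a careful analysis of what happens when $j$ and $i_1$ are close (equal, or differing by $1$) to show that the "bad" words either vanish as operators or pair off in a sign-reversing involution with the subtracted sum. Equivalently, one shows the bijection at the level of the index sequences: the disjoint union $\{\,\upl_j w' : j\leq i_1(w'),\ w'\in RHW_{n-1,k-1}\,\}$ is $RHW_{n,k}$, and the remaining words $\{\,\upl_j w' : j> i_1(w')\,\}$ cancel against $\bigl(\sum_{w\in RHW_{n,k-1}}w\bigr)(\alpha)$ as operators.

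I would organize the write-up as: (i) state the inductive setup and dispose of the base cases; (ii) apply Theorem \ref{ribbonmultiplication}, Lemma \ref{ribbon-ncschur-equality-for-row-and-column}, and Proposition \ref{lpr} to get the recursive identity; (iii) split the prepended words into hookwords and non-hookwords, identifying the former with $RHW_{n,k}$; (iv) prove that the non-hookword terms cancel the $RHW_{n,k-1}$ contribution, using Lemma \ref{modifiedlocalplactic} to handle the generic case and a direct check for the near-diagonal case $|j-i_1|\leq 1$. The main obstacle is step (iv): the operators $\upl_i$ are genuinely non-commuting and even nilpotent-flavoured (a column index $i$ only acts when a part $i-1$ is present), so the cancellation is not a formal identity in a free algebra but must be verified as an identity of operators on $\mathbb{C}Comp$; handling the cases where $j=i_1$ or $j=i_1+1$ carefully — where commutation fails — will require the most attention, and is presumably where the structure of reverse hookwords (as opposed to arbitrary words) is used in an essential way.
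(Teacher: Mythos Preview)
Your application of Theorem \ref{ribbonmultiplication} is incorrect, and this error is fatal to the approach. With $\alpha=(1)$ and $\beta=(1^{k-1},n-k)$ (and $k\geq 2$), the near-concatenation $\mu$ adds the last part of $\alpha$ to the \emph{first} part of $\beta$, giving $\mu=(2,1^{k-2},n-k)$, not $(1^{k-1},n-k+1)$. Thus the correct identity is
\[
\rib_{(1)}\cdot\rib_{(1^{k-1},n-k)}=\rib_{(1^{k},n-k)}+\rib_{(2,1^{k-2},n-k)},
\]
and the second ribbon is no longer a hook; your inductive hypothesis says nothing about it, so the recursion collapses. There is no way to peel off a single box and stay inside the hook family: multiplying by $\rib_{(1)}$ on the right produces $\rib_{(1^{k-1},n-k,1)}$, which is also not a hook.

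The identity you wrote \emph{does} hold, but it comes from the factorization $\rib_{(1^{k})}\cdot\rib_{(n-k)}$ --- and this is exactly what the paper does (with a shift of index, using $\rib_{(1^{k+1})}\cdot\rib_{(n-k-1)}$). The crucial payoff of that factorization is that step (iv), which you correctly identified as the hard part of your plan, disappears entirely. Both factors are handled directly by Proposition \ref{lpr}, so
\[
\rib_{(1^{k+1})}\cdot\rib_{(n-k-1)}\cdot\ncsa
=\Bigl(\sum_{i_1\leq\cdots\leq i_{k+1}}\upl_{i_1}\cdots\upl_{i_{k+1}}\Bigr)
\Bigl(\sum_{i_{k+2}>\cdots>i_n}\upl_{i_{k+2}}\cdots\upl_{i_n}\Bigr)(\alpha),
\]
and the resulting words split into $RHW_{n,k}$ (when $i_{k+1}>i_{k+2}$) and $RHW_{n,k+1}$ (when $i_{k+1}\leq i_{k+2}$) by a pure set-theoretic bijection on index sequences. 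No commutation relations, no sign-reversing involution, no near-diagonal case analysis --- Lemma \ref{modifiedlocalplactic} is not used at all here. Your instinct that operator-level cancellation would be delicate was right; the point is that the correct factorization avoids it completely.
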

\begin{proof}Note first that in the case $k=0$, the claim is true as it is equivalent to Proposition \ref{lpr}. We will establish the claim by induction on $k$. Assume that the claims holds for all integers $\leq k$ for some $k\geq 1$. We will compute $(\rib_{(1^{k+1})}\cdot\rib_{(n-k-1)})\cdot \ncsa$ in two ways. First notice that by using Theorem \ref{ribbonmultiplication}, we get
\begin{eqnarray}\label{ehfirstway}
 (\rib_{(1^{k+1})}\cdot\rib_{(n-k-1)})\cdot \ncsa &=& \rib_{(1^{k+1},n-k-1)}\cdot \ncsa + \rib_{(1^k,n-k)}\cdot \ncsa .
\end{eqnarray}
Now, using Proposition \ref{lpr}, we have that 
\begin{eqnarray}\label{ehsecondway}
(\rib_{(1^{k+1})}\cdot\rib_{(n-k-1)})\cdot \ncsa&=& \left(\displaystyle\sum_{i_1\leq \cdots\leq i_{k+1}} \upl_{i_1}\cdots\upl_{i_{k+1}}\right)\left(\displaystyle\sum_{i_{k+2}>\cdots >i_n}\upl_{i_{k+2}}\cdots\upl_{i_n}\right)(\alpha)\nonumber\\&=& \left( \displaystyle\sum_{w\in RHW_{n,k}} w\right) (\alpha)+\left( \displaystyle\sum_{w'\in RHW_{n,k+1}} w'\right) (\alpha).
\end{eqnarray}
By induction hypothesis we know that 
\begin{eqnarray}\label{ehcombined}
\rib_{(1^k,n-k)}\cdot \ncsa &=&\left( \displaystyle\sum_{w\in RHW_{n,k}} w\right) (\alpha).
\end{eqnarray}
Now, using \eqref{ehfirstway}, \eqref{ehsecondway} and \eqref{ehcombined}, the claim follows.
\end{proof}

\section{Multiplication by noncommutative power sums in terms of box-adding operators}\label{sec:boxaddingmurnaghannakayama}
 On using the definition of the noncommutative power sum function $\Psi_n$ in terms of the noncommutative ribbon Schur functions and Lemma \ref{actionribbon} subsequently, we get that \begin{eqnarray}\label{primordialMurNak}
\Psi_n \cdot \ncsa &=& \left(\sum_{k=0}^{n-1}(-1)^k\rib_{(1^k,n-k)}\right)\cdot \ncsa\nonumber \\ &=&\sum_{k=0}^{n-1}(-1)^k\sum_{w \in RHW_{n,k}}w (\alpha)\nonumber \\ &=&\sum_{w\in RHW_n} (-1)^{\asc(w)}w (\alpha).
\end{eqnarray}
Using the involution described in Section $5$ of \cite{fomin-greene-1}, we can restate \eqref{primordialMurNak} in the following form.
\begin{eqnarray}\label{advMurNak}
\Psi_n \cdot \ncsa &=& \sum_{w \in CRHW_n} (-1)^{\asc(w)}w (\alpha)
\end{eqnarray}
While the equation above is a legitimate way to compute $\Psi_n\cdot \ncsa$, it is not cancellation-free. Achieving this will be our aim in the next section. 
\begin{Example}
As an example, we will compute $\Psi_2\cdot \ncsa$ where $\alpha=(1,3,2)$. The possible elements of $CRHW_2$ are $\upl_i\upl_i$, $\upl_i\upl_{i+1}$ and $\upl_{i+1}\upl_i$ for $i\geq 1$. Thus, we have
\begin{eqnarray*}
\Psi_2\cdot \ncs_{(1,3,2)} &=& \sum_{i\geq 1}(\upl_{i+1}\upl_i{-}\upl_i\upl_{i+1}{-}\upl_i^2)((1,3,2))\\ &=& (\upl_2\upl_1{+}\upl_3\upl_2{+}\upl_4\upl_3{+}\upl_5\upl_4{-}\upl_1\upl_2{-}\upl_2\upl_3-\upl_3\upl_4-\upl_1\upl_1)((1,3,2))\\&=&(2,1,3,2){+}(3,3,2){+}(1,4,3){+}(1,5,2){-}(1,2,3,2)\\&&{-}(2,3,3){-}(1,4,3){-}(1,1,1,3,2)\\&=&(2,1,3,2){+}(3,3,2){+}(1,5,2){-}(1,2,3,2){-}(2,3,3){-}(1,1,1,3,2).
\end{eqnarray*}
Note that this example reaffirms our claim that \eqref{advMurNak} is not cancellation-free. The composition (1,4,3) appeared once with coefficient 1, and once with coefficient -1. Hence it does not appear in the final result.
\end{Example}

\section{The Murnaghan-Nakayama rule for noncommutative Schur functions}\label{sec:finalmurnaghannakayama}
By \eqref{advMurNak} we know that 
\beqn 
\Psi_n\cdot \ncsa&=&\displaystyle \sum_{\beta}k_{\beta}\ncsb
\eeqn
where $$k_{\beta}=\displaystyle\sum_{\substack{w\in CRHW_n\\w(\alpha)=\beta}}(-1)^{\asc(w)}.$$
Thus, our aim is to compute $k_{\beta}$ given any composition $\beta$. Note that if we are given a $\beta$ such that there exists a $w\in CRHW_n$ with $w(\alpha)=\beta$, then $\alpha <_{c}\beta$ and the skew reverse composition shape $\beta/\!\!/\alpha$ has $n$ boxes, i.e. $\lvert \beta/\!\!/\alpha\rvert =n$.

Suppose $\alpha<_{c}\beta$. Define the \textit{support}, $\supp(\beta/\!\!/\alpha)$, as follows.
\begin{eqnarray*}
\supp(\beta/\!\!/ \alpha)=\{j:(i,j)\in \beta/\!\!/\alpha\}.
\end{eqnarray*}
The skew reverse composition shape $\beta/\!\!/\alpha$ whose support is an interval in $\mathbb{Z}_{\geq 1}$ will be called an \textit{interval shape}. Finally, an interval shape $\beta/\!\!/\alpha$  will be called an \textit{nc border strip} if it satisfies the following conditions.
\begin{enumerate}
\item Suppose there exist boxes in positions $(i,1)$ and $(i,2)$ in $\beta/\!\!/\alpha$. Then the box in position $(i,1)$ is the bottommost box in column $1$ of $\beta/\!\!/\alpha$.
\item Suppose there exist boxes in positions $(i,j)$ and $(i,j+1)$ in $\beta/\!\!/\alpha$ for $j\geq 2$. Then the box in position $(i,j)$ is the topmost box in column $j$ of $\beta/\!\!/\alpha$.
\end{enumerate}
Given an nc border strip $\beta/\!\!/\alpha$, its \textit{height} $ht(\beta/\!\!/\alpha)$ is defined to be one less than the number of row it occupies.
 
Suppose we are given an interval shape $\beta/\!\!/\alpha$. We discuss three notions that have to do with the relative positions of boxes in consecutive columns of $\beta/\!\!/\alpha$. We say that column $j+1$ is \textit{south-east} of column $j$ in $\beta/\!\!/\alpha$ if there is a box in column $j+1$ that is strictly south-east of a box in column $j$. 
We say that column $j+1$ is \textit{north-east} of column $j$, if for any pair of  boxes $(i_1,j+1)$ and $(i_2,j)$ in $\beta/\!\!/\alpha$, we have $i_1<i_2$. Here, we assume that there exists at least one box in the $j+1$-th column.
Finally, we say that column $j+1$ is \textit{east} of column $j$ if there are boxes in positions $(i,j)$ and $(i,j+1)$ for some $i$.

This given, we associate three statistics with an interval shape $\beta/\!\!/\alpha$ as follows. 
\begin{eqnarray*}
E(\beta/\!\!/\alpha)&=&\{j: \text{column $j+1$ is east of column $j$}\}\\
SE(\beta/\!\!/\alpha)&=&\{j:\text{column $j+1$ is south-east of  column $j$ and $j\notin E(\beta/\!\!/\alpha)$ }\}\\
NE(\beta/\!\!/\alpha)&=&\{j:\text{column $j+1$ is north-east of column $j$}\}
\end{eqnarray*}
\begin{Example}\label{exampleforelementofbalpha}
Consider the following interval shape $\beta/\!\!/\alpha$, where the unfilled boxes belong to the inner shape and those shaded blue belong to the outer shape.
\begin{eqnarray*}
\ytableausetup{centertableaux}
\begin{ytableau}
\myb & \myb\\ \cw& \cw &\cw &\myb&\myb\\\cw & \myb\\ \cw & \cw &\myb
\end{ytableau}
\end{eqnarray*}
In this case we have $E(\beta/\!\!/\alpha)=\{1,4\},
SE(\beta/\!\!/\alpha)=\{2\},
NE(\beta/\!\!/\alpha)=\{3\}.$ Notice that the above interval shape is actually an nc border strip whose height is $3$.
\end{Example}

\begin{Remark}
Given an interval shape $\beta/\!\!/\alpha $, note that $E(\beta/\!\!/\alpha)$, $SE(\beta/\!\!/\alpha)$, $NE(\beta/\!\!/\alpha)$ are always pairwise disjoint. Furthermore, if $p$ is the maximum element of $\supp(\beta/\!\!/\alpha)$, then we have 
\begin{eqnarray*}
\supp(\beta/\!\!/\alpha)&=&\{p\}\uplus E(\beta/\!\!/\alpha)\uplus SE(\beta/\!\!/\alpha) \uplus NE(\beta/\!\!/\alpha).
\end{eqnarray*}
\end{Remark}
Now, consider the following two sets 
\begin{eqnarray*}
A_{\alpha,n}&=&\{\beta:\beta = w(\alpha) \text{ for some } w\in CRHW_n\},\\
B_{\alpha,n}&=&\{\beta: \alpha <_{c}\beta\text{ and } \beta/\!\!/\alpha \text{ is an nc border strip of size } n\}.
\end{eqnarray*}
Our aim is to establish that $A_{\alpha,n}=B_{\alpha,n}$. This means that we will be able to replace checking whether there exists $w\in CRHW_n$ such that $w(\alpha)=\beta$ with checking whether $\beta/\!\!/\alpha$ is an nc border strip.

\begin{Lemma}\label{reversehookwordgivenncborderstrip}
$$B_{\alpha,n}\subseteq A_{\alpha,n}.$$
\end{Lemma}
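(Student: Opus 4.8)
The plan is to take an arbitrary $\beta\in B_{\alpha,n}$ and construct, by hand, a connected reverse hookword $w$ with $w(\alpha)=\beta$. The first point is that connectedness comes for free: if $w=\upl_{i_1}\cdots\upl_{i_n}$ satisfies $w(\alpha)=\beta$, then, since each $\upl_i$ adds a box in column $i$ (and the shifting induced by $\upl_1$ does not move boxes between columns), the set $\supp(w)=\{i_1,\dots,i_n\}$ is exactly the set of columns occupied by $\beta/\!\!/\alpha$, namely $\supp(\beta/\!\!/\alpha)$; and an nc border strip is in particular an interval shape, so this set is an interval. Thus $w\in CRHW_n$ as soon as $w$ is a reverse hookword, and it remains only to produce a reverse $k$-hookword (for some $k$) taking $\alpha$ to $\beta$. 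Moreover, every maximal chain $\alpha=\gamma_0\lessdot_c\gamma_1\lessdot_c\cdots\lessdot_c\gamma_n=\beta$ in $\mathcal{L}_c$ yields such a word, namely $\upl_{c_n}\cdots\upl_{c_1}$ where $c_t$ is the column of the box added at the $t$-th step (using that the covers of a composition add boxes in pairwise distinct columns, so that $\upl_{c_t}$ recovers $\gamma_t$ from $\gamma_{t-1}$). So it suffices to exhibit a maximal chain from $\alpha$ to $\beta$ in which the columns of the boxes added \emph{strictly increase up to the column $p:=\max\supp(\beta/\!\!/\alpha)$ and thereafter weakly decrease}: that is precisely the reverse-$k$-hookword condition, with $p$ appearing at position $k+1$.

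I would build this chain directly from the combinatorics of $S:=\beta/\!\!/\alpha$. Write $\supp(S)=[a,p]$ and recall that $[a,p-1]=E(S)\uplus SE(S)\uplus NE(S)$. Split the boxes of $S$ into $H$ (added first, in strictly increasing column order, ending with a distinguished box of column $p$) and $V$ (added afterwards, in weakly decreasing column order): $H$ consists of one box from each column in $\{p\}\cup E(S)\cup NE(S)$ — in the $E$-case the box of column $j$ sharing a row with a box of column $j+1$ (which, by the conditions (1),(2) defining an nc border strip, is the bottom box of column $1$ when $j=1$ and the top box of column $j$ when $j\ge2$), and in the $NE$-case the top box of column $j$ — while $V$ gathers all remaining boxes (in particular all boxes of every column in $SE(S)$, and the surplus boxes of any multi-box column). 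The defining conditions of an nc border strip are exactly what should make $H$ a horizontal strip added left-to-right, $V$ a vertical strip added right-to-left, and the resulting sequence of compositions a genuine maximal chain whose word is a reverse $k$-hookword with $k=|V|$.

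Two things then need checking: (a) that adding the boxes of $S$ in this prescribed order really produces a maximal chain in $\mathcal{L}_c$ from $\alpha$ to $\beta$ — equivalently, that labelling the boxes $n,n-1,\dots,1$ in this order gives a legitimate SRCT of shape $\beta/\!\!/\alpha$ (rows weakly decreasing, first column strictly increasing, no triple rule violation); and (b) that the induced word satisfies the hookword inequalities, which is immediate from the construction. Given (a) and (b), the chain's word lies in $CRHW_n$ and takes $\alpha$ to $\beta$, so $\beta\in A_{\alpha,n}$ and we are done.

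The hard part is (a): one must run through the three cases $j\in E(S)$, $j\in SE(S)$, $j\in NE(S)$ and verify that the boxes selected in consecutive columns $j$, $j+1$ interlock so that at each stage the box being added is actually addable by the relevant operator $\upl_i$ (it sits atop a row of length $i-1$) and so that no triple rule violation is ever created; this is the only place where the defining conditions of an nc border strip get used in full. I expect the cleanest way to run this is induction on $n$: peel off a suitable box of $S$ — for instance a box in the leftmost column $a$, so that the residual support stays an interval — show that the residual shape is again an nc border strip of size $n-1$, of the form $\beta'/\!\!/\alpha$ with $\beta'\lessdot_c\beta$, apply the inductive hypothesis to obtain a connected reverse hookword for it, and check that re-attaching the peeled box via the appropriate $\upl_i$ keeps the word a reverse hookword. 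The base case $n=1$ is trivial, since then $\beta=\upl_j(\alpha)$ for the column $j$ of the unique box of $S$ and $\upl_j\in CRHW_1$. Getting the choice of peeled box and the re-attachment to respect the hookword structure in every case is the point I expect to require the most care.
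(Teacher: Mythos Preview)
Your overall strategy matches the paper's: both build an explicit filling of $\beta/\!\!/\alpha$ whose associated word is a connected reverse hookword. The paper places the $|E(\beta/\!\!/\alpha)|$ largest entries into the $E$-columns (bottom box if the column is $1$, top box otherwise) and then sweeps the remaining boxes right-to-left, top-to-bottom; this produces the word with $\leg(w)=E(\beta/\!\!/\alpha)\cup\{p\}$. Your $H$/$V$ split is the same idea with the cosmetic difference that you also put the $NE$-columns into the leg, i.e.\ $\leg(w)=E(\beta/\!\!/\alpha)\cup NE(\beta/\!\!/\alpha)\cup\{p\}$. Both are legitimate (the paper later shows there are exactly $2^{|NE(\beta/\!\!/\alpha)|}$ valid hookwords, parametrised by which subset of $NE$ lands in the leg), so your construction is sound.

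Where your proposal diverges from the paper is in the verification, and there the inductive plan has a real problem. You propose to peel a box from the leftmost column $a$ so as to obtain $\beta'\lessdot_{c}\beta$ with $\beta'/\!\!/\alpha$ again an nc border strip, then prepend $\upl_{a}$ to the inductively obtained word. But such a $\beta'$ need not exist: take $\alpha=(2)$, $\beta=(3,2)$, so that $\beta/\!\!/\alpha=\{(1,1),(1,2),(1,3)\}$ is an nc border strip with $a=1$; since $\beta$ has no part equal to $1$, there is no $\beta'\lessdot_{c}\beta$ with $\beta/\!\!/\beta'$ in column $1$. More generally, whenever the column-$a$ box lies in a row of $\beta$ that continues to the right (which is forced when $a\in E(\beta/\!\!/\alpha)$), that box is not removable from $\beta$ at all. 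One can try to peel from other columns, but then prepending the corresponding $\upl_i$ on the left need not keep the word a reverse hookword unless one simultaneously strengthens the inductive hypothesis (e.g.\ controls whether the shorter word is a reverse $0$-hookword). The paper sidesteps all of this by verifying the SRCT conditions for its explicit filling directly (rows decrease, first column increases, no triple-rule violations), and that is the cleaner route here; your $H$/$V$ filling admits the same kind of direct check.
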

\begin{proof}
To establish the claim, we need to show that given $\beta\in B_{\alpha,n}$ (i.e. $\beta/\!\!/\alpha$ is an nc border strip), there exists a $w\in CRHW_n$ such that $w(\alpha)=\beta$.
We will construct an SRCT of shape $\beta/\!\!/\alpha$ that
will be associated (in the sense of Section \ref{maximalchainbijection}) with an element of $CRHW_n$.

Let
$E(\beta/\!\!/\alpha)=\{x_1<\cdots <x_s\}$. For every $x_i$, place the integer $n-i+1$ in the topmost box of the $x_i$-th
column of $\beta/\!\!/\alpha$ if $x_i>1$ and in the bottommost box if $x_i=1$. After this step, traverse the remaining $n-s$ unfilled boxes in $\beta/\!\!/\alpha$ in the manner described below and place the integers $n-s$ down to $1$ in that order.
\begin{itemize}
\item Start from the rightmost column that contains an empty box.
\item Visit every empty box from top to bottom if column under consideration is not the first column, otherwise visit boxes from bottom to top.
\item Repeat the above steps for the next rightmost column in  $\beta/\!\!/\alpha$ that contains an empty box.
\end{itemize} 
We claim that the resulting filling, which we call $\tau$, is an SRCT. By construction, if
$1\in \supp(\beta/\!\!/\alpha)$, then the entries in the boxes in
column $1$ strictly increase from top to bottom. In all other columns,
the entries decrease from top to bottom.

To show that entries decrease
from left to right along rows, assume first the existence of boxes in
position $(i,j)$
and $(i,j+1)$ in $\beta/\!\!/\alpha$. Clearly $j\in
E(\beta/\!\!/\alpha)$. Since $\beta/\!\!/\alpha$ is an nc border strip, we know that the box in position
$(i,j)$ is the topmost box in the $j$-th column  of $\beta/\!\!/\alpha$ if $j\geq 2$, and is the bottommost box if $j=1$. By our construction of the filling, if $j\in E(\beta/\!\!/\alpha)$, every box of $\beta/\!\!/\alpha$ in column $j+1$ contains a number strictly less than $\ctau_{(i,j)}$. Hence in particular, $\tau_{(i,j)}>\tau_{(i,j+1)}$ and thus, the entries decrease from left to right along rows.

Now we have to show that there are no triple rule violations in $\tau$. Since, in every column $k$ where $k\geq 2$, the entries decrease from top to bottom, we are guaranteed there are no triple rule violations of the form 
\begin{eqnarray*}
\ytableausetup{mathmode, boxsize=3.2em}
\begin{ytableau}\myb
  \tau_{(i,j)}& \myb\tau_{(i,j+1)}\\\none &\none[\vdots]\\\none &\myb\tau_{(i',j+1)}\end{ytableau}
\end{eqnarray*}
where $j\geq 1$, and $(i',j+1)$, $(i,j+1)\in \beta/\!\!/\alpha$ with $i'>i$.

Now assume that $(i,j+1)\notin \beta/\!\!/\alpha$. We will show that $\ctau_{(i,j)}<\ctau_{(i',j+1)}$ for any $i'>i$ where $(i',j+1)\in \beta/\!\!/\alpha$.
Assume first that $j\in E(\beta/\!\!/\alpha)$. If $j=1$, then we know that $(i,j)$ is not the bottommost box in column $j$ of $\beta/\!\!/\alpha$, as $(i,j+1)\notin \beta/\!\!/\alpha$. Thus, by construction, $\tau_{(i,j)}$ is strictly less than every entry in column $2$. In particular, $\tau_{(i',j+1)}>\tau_{(i,j)}$, and we do not have triple rule violations. If $j\geq 2$, then we know that the box in position $(i,j)$ is not the topmost box in column $j$. Again, we have that $\tau_{(i,j)}$ is strictly less than every entry in column $j+1$, and as before, we have no triple rule violations.

Now suppose that $(i,j+1)\notin \beta/\!\!/\alpha$, and that $j\notin E(\beta/\!\!/\alpha)$. Then, the way the filling $\tau$ has been constructed implies that the greatest entry in column $j$ is less than the smallest entry in column $j+1$. This guarantees that $\tau_{(i',j+1)}>\tau_{(i,j)}$, and we have no triple rule violations.

Hence, $\tau$ is an SRCT. Clearly, the word associated with $\tau$ is an element of $RHW_n$. The fact that it is actually an element of $CRHW_n$ follows from the fact that $\beta/\!\!/\alpha$ is an interval shape. Hence there exists a $w\in CRHW_n$ such that $w(\alpha)=\beta$, implying $\beta \in A_{\alpha,n}$.
\end{proof}

Next, we give an example of the construction presented in the algorithm above.
\begin{Example}
Consider the shape $\beta/\!\!/\alpha$ presented in Example \ref{exampleforelementofbalpha}.
\begin{eqnarray*}
\ytableausetup{centertableaux,boxsize=1em}
\begin{ytableau}
\myb & \myb\\ \cw& \cw &\cw &\myb&\myb\\\cw & \myb\\ \cw & \cw &\myb
\end{ytableau}
\end{eqnarray*}
Here $\vert \beta/\!\!/\alpha\vert=6$. Hence we will be placing the numbers $1$ to $6$ in the blue boxes above. Since $E(\beta/\!\!/\alpha)=\{1,4\}$, the bottommost box in column $1$ of $\beta/\!\!/\alpha$ will have $6$ placed in it, while the topmost box in column $4$ will have $5$ placed in it. Now the columns with empty boxes, considered from right to left, are $5$, $3$ and $2$. We fill in these boxes with the numbers $4,3,2,1$ in that order from top to bottom in each column to obtain the following filling.
\begin{eqnarray*}
\ytableausetup{centertableaux,boxsize=1em}
\begin{ytableau}
\myb6 & \myb2\\ \cw& \cw &\cw &\myb 5&\myb4\\\cw & \myb1\\ \cw & \cw &\myb3
\end{ytableau}
\end{eqnarray*}
\end{Example}
\begin{Lemma}\label{orderofentriesincolumns}
Suppose $w\in CRHW_n$ is such that $w(\alpha)=\beta$. Let $\tau$ be the SRCT of shape $\beta/\!\!/\alpha$ corresponding to the word $w$. Then, the entries in $\tau$ in column $j$ strictly decrease from top to bottom for all $j\geq 2$. 
\end{Lemma}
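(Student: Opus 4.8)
The statement asserts that in the SRCT $\tau$ attached to a connected reverse hookword $w$ with $w(\alpha)=\beta$, every column $j\geq 2$ has strictly decreasing entries from top to bottom. My plan is to trace through how boxes get added by the sequence of operators $\upl_{i_1}\cdots\upl_{i_n}$ reading $w$ from left to right, recalling from Section \ref{maximalchainbijection} that the box added by $\upl_{i_p}$ (the $p$-th operator) receives the entry $n-p+1$; thus larger entries are added earlier. So I need to show: whenever two boxes lie in the same column $j\geq 2$, the one added \emph{earlier} (larger entry) sits \emph{above} the one added later.

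Fix a column $j\geq 2$ and suppose $\upl_j$ is applied at (left-to-right) positions $p_1<p_2<\cdots<p_m$ in $w$. I would argue that each successive application of $\upl_j$ adds a box strictly below the previous one, so that the entries $n-p_1+1>n-p_2+1>\cdots$ appear top-to-bottom in decreasing order. The key structural fact is the cover relation $\lessdot_c$: applying $\upl_j$ to a composition means finding the \emph{leftmost} part equal to $j-1$ and incrementing it, producing a box in column $j$ in that row. So I must control, between two consecutive applications of $\upl_j$, how the multiset of rows with a part equal to $j-1$ evolves. Here is where connectedness and the hookword structure enter: because $w$ is a reverse $k$-hookword, $w = \upl_{i_1}\cdots\upl_{i_n}$ with $i_1\le\cdots\le i_{k+1}>i_{k+2}>\cdots>i_n$, the index $j$ can be repeated only within the ascending ``arm'' portion (the descending ``leg'' has strictly decreasing indices). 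So all applications of $\upl_j$ with $j\geq 2$ occur among $\upl_{i_1},\dots,\upl_{i_{k+1}}$, and between two consecutive ones $\upl_{i_{p_t}}=\upl_{i_{p_{t+1}}}=\upl_j$ the intervening operators $\upl_{i_{p_t+1}},\dots,\upl_{i_{p_{t+1}-1}}$ all have index $i$ with $j\le i$, i.e.\ index $\ge j$, hence they increment parts equal to $i-1\ge j-1$ and in particular never create a \emph{new} part equal to $j-1$ strictly above the row where the previous $\upl_j$ acted — while they may remove the part $j-1$ from the row just acted upon (by a subsequent $\upl_{j}$ would not, but $\upl_{j+1}$ could). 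The cleanest way to phrase this: after applying $\upl_j$ in row $r$, row $r$ now has a part equal to $j$, and no intervening operator of index $\ge j+1$ can touch a row strictly above $r$ and turn it into a part $j-1$; operators of index exactly $j$ are precisely the later $\upl_j$'s we are tracking. Hence the leftmost part equal to $j-1$ at time $p_{t+1}$ lies in a row strictly below $r$, so the next box in column $j$ is strictly lower.

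I expect the main obstacle to be making the ``no intervening operator creates a part $j-1$ above row $r$'' claim fully rigorous, since $\upl_i$ for $i=j$ would but those are exactly the tracked later applications, and one must check carefully that an operator $\upl_i$ with $i>j$, which increments a part $i-1\geq j$, can only \emph{destroy} occurrences of $j-1$, never create them — this is immediate because $\upl_i$ changes an $i-1$ into an $i$ and changes nothing else, and $i-1\ne j-1$. The only subtlety is an $\upl_1$ prepending a new part $1$: this is harmless since $1\ne j-1$ as $j\ge 2$ forces $j-1\ge 1$, with equality only when $j=2$, in which case prepending a $1$ \emph{does} create a new part equal to $j-1=1$ at the very top (row $1$); but then the next $\upl_2$ would act on that top row, potentially violating the claim. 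However, in a reverse hookword the arm indices are weakly increasing, so once $\upl_2$ has appeared, no later arm operator is $\upl_1$; thus no $\upl_1$ can intervene between two $\upl_2$'s. This observation closes the gap, and I would highlight it as the one place the hookword hypothesis is genuinely used. With that, the induction over successive applications of $\upl_j$ goes through, completing the proof.
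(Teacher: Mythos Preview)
Your approach coincides with the paper's: both observe that between any two occurrences of $\upl_j$ in $w$ every intervening operator has index strictly greater than $j$, hence cannot create a new part equal to $j{-}1$, so repeated applications of $\upl_j$ (for $j\ge 2$) add boxes to successively lower rows. The paper packages this via the factorization $w = w_3\,\upl_j^{\,c-1}\,w_2\,\upl_j\,w_1$, where the single rightmost $\upl_j$ may lie in the leg and $w_2$ collects the operators (all of index $>j$) separating it from the contiguous arm block $\upl_j^{\,c-1}$.

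Two points in your write-up are wrong and should be fixed. First, the operators in $w=\upl_{i_1}\cdots\upl_{i_n}$ are applied \emph{right to left}: $\upl_{i_n}$ acts first and $\upl_{i_1}$ last, so the box added by $\upl_{i_p}$ receives entry $p$, not $n{-}p{+}1$ (check the example in Section~\ref{maximalchainbijection}). With the correct order it is the \emph{rightmost} occurrence of $\upl_j$ that lands in the highest row, and the top-to-bottom entries in column $j$ are $p_m>p_{m-1}>\cdots>p_1$, not $n{-}p_1{+}1>\cdots$. Second, your claim that all occurrences of $\upl_j$ lie among $\upl_{i_1},\dots,\upl_{i_{k+1}}$ is false: a single occurrence may sit in the strictly decreasing leg. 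This is precisely the case in which nontrivial intervening operators exist --- if all the $\upl_j$'s were in the arm they would be adjacent and there would be nothing to check --- so your justification for ``intervening indices $\ge j$'' does not actually cover the relevant situation. Once the direction is corrected and the leg occurrence is accounted for, your argument is the paper's.
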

\begin{proof}
Suppose $w$ has $c$ occurrences of $\upl_{j}$ where $j\geq 2$. We can then factorize $w$ as follows
\begin{eqnarray}
w=w_3\upl_{j}^{c-1}w_2\upl_{j}w_1,
\end{eqnarray}
where $w_1,w_2$ could be empty words. Notice that since $w$ is a reverse hookword, all the operators that constitute $w_2$ are of the form $\upl_k$ where $k>j$. Hence they only add boxes to parts of length $\geq j$ in the composition $\upl_{j}w_1(\alpha)$.
Now $\upl_j^{c-1}$ adds boxes in the $j$-th column. 
Thus, we have added $c$ boxes in the $j$-th column and the definition of the box-adding operators implies that repeated applications of $\upl_j$ add boxes from top to bottom, when $j\geq 2$. Hence the entries in these $c$ boxes strictly decrease from top to bottom.
\end{proof}

Next, we note down a couple of lemmas that are immediate consequences of the bijection mentioned in Subsection \ref{maximalchainbijection}. For Lemmas \ref{observation1} and \ref{observation2}, and Corollary \ref{notgreatersmallerthaneverythingelse}, we will work under the assumption that $w\in CRHW_n$ is such that $w(\alpha)=\beta$, and that $\tau$ is the SRCT of shape $\beta/\!\!/\alpha$ corresponding to $w$. We will assume further that $j$ belongs to $\supp (\beta/\!\!/\alpha)$ (or $\supp(w)$), but is not the greatest element therein.
\begin{Lemma}\label{observation1}
Suppose $j\notin \leg(w)$ for some $j\geq 1$, then the greatest entry in column $j$ of $\tau$ is  strictly less than the smallest entry in column $j+1$ in $\tau$.
\end{Lemma}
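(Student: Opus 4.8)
The statement to prove is Lemma \ref{observation1}: under the running assumptions ($w \in CRHW_n$ with $w(\alpha) = \beta$, $\tau$ the associated SRCT of shape $\beta/\!\!/\alpha$, and $j \in \supp(w)$ not the maximum), if $j \notin \leg(w)$ then the greatest entry in column $j$ of $\tau$ is strictly less than the smallest entry in column $j+1$. My plan is to unwind the recipe from Subsection \ref{maximalchainbijection} that builds $\tau$ from $w$: the box added by the $t$-th operator read from the \emph{right} gets the label $n - (n-t+1) + 1$... more precisely, if $w = \upl_{i_1}\cdots \upl_{i_n}$ then $\upl_{i_t}$ adds the box labelled $n-t+1$, so operators applied earlier (rightmost in the word, small index $t$) receive \emph{large} labels, and operators applied later receive small labels. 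Equivalently, a box $b$ has a larger entry than a box $b'$ in $\tau$ exactly when the operator creating $b$ appears to the right of the operator creating $b'$ in $w$.

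**The key reduction.** Since $w$ is a reverse $k$-hookword, write $w = \upl_{i_1}\cdots\upl_{i_n}$ with $i_1 \le \cdots \le i_{k+1} > i_{k+2} > \cdots > i_n$, so $\arm(w) = \{i_1,\dots,i_{k+1}\}$ and $\leg(w) = \{i_{k+1},\dots,i_n\}$. The hypothesis $j \notin \leg(w)$ means $j \notin \{i_{k+1},\dots,i_n\}$, so every occurrence of $\upl_j$ in $w$ lies in the strictly increasing ``arm'' prefix $\upl_{i_1}\cdots\upl_{i_{k+1}}$; moreover since the $i_1,\dots,i_{k+1}$ are weakly increasing and $j$ occurs there, \emph{all} occurrences of $\upl_j$ are consecutive, and every $\upl_{j+1}$ in $w$ (whether in the arm or the leg) lies strictly to the right of the rightmost $\upl_j$: in the arm because $j+1 > j$ and the arm is weakly increasing, and in the leg because the leg consists of indices $\le i_{k+1}$ read in strictly decreasing order, and $i_{k+1} \ge j+1$ forces any leg-occurrence of $\upl_{j+1}$ to precede any leg-occurrence of $\upl_j$ — but there are no leg-occurrences of $\upl_j$ by hypothesis, so this is automatic; the point is simply that the rightmost $\upl_{j+1}$ is to the right of the rightmost $\upl_j$. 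Hence: \emph{every} operator $\upl_{j+1}$ in $w$ appears to the right of \emph{every} operator $\upl_j$ in $w$. Translating through the labelling convention, every box in column $j+1$ of $\tau$ carries a strictly smaller entry than every box in column $j$, which is exactly the claim.

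**Anticipated obstacle and how to handle it.** The main thing to get right — and where the argument could slip — is the bookkeeping of the two directions at once: (i) that the arm indices $i_1 \le \cdots \le i_{k+1}$ being weakly increasing forces all $\upl_{j+1}$'s in the arm to sit to the right of all $\upl_j$'s in the arm, and (ii) that the leg, read left to right, is \emph{strictly decreasing}, so if a $\upl_{j+1}$ occurred in the leg it would have to appear before any $\upl_j$ in the leg — but since $j \notin \leg(w)$ there is no $\upl_j$ in the leg, so every leg-occurrence of $\upl_{j+1}$ is still to the right of the arm, hence to the right of every $\upl_j$. Combining (i) and (ii) gives the uniform statement ``rightmost $\upl_j$ precedes leftmost $\upl_{j+1}$ does not even need to be invoked — rather every $\upl_j$ precedes every $\upl_{j+1}$.'' One should also confirm that column $j+1$ of $\tau$ is nonempty (so ``smallest entry in column $j+1$'' is meaningful): this holds because $j+1 \le p$ where $p = \max\supp(w)$ — since $j$ is not the maximum of $\supp(w)$ and $\supp(w)$ is an interval, $j+1 \in \supp(w)$, so some $\upl_{j+1}$ occurs in $w$ and adds a box in column $j+1$. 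With these pieces in place the proof is a short direct argument; no serious calculation is needed.
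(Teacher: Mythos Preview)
Your approach is exactly the one the paper intends: the paper states this lemma without proof, calling it an ``immediate consequence'' of the bijection in Subsection~\ref{maximalchainbijection}, and your argument unwinds that bijection directly. The substance is correct.

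There is, however, a direction slip in two places that you should fix. First, in your parenthetical ``(rightmost in the word, small index $t$)'': with $w=\upl_{i_1}\cdots\upl_{i_n}$, the rightmost operator is $\upl_{i_n}$, so the index is \emph{large}, not small. You get the conclusion of that sentence right (``a box $b$ has a larger entry \ldots\ when the operator creating $b$ appears to the right''), so this is cosmetic. Second, and more importantly, your final translation is inverted: having correctly shown that every $\upl_{j+1}$ lies to the right of every $\upl_j$ in $w$, the labelling convention (right $=$ larger entry) gives that every box in column $j+1$ carries a strictly \emph{larger} entry than every box in column $j$, not smaller. That corrected statement is indeed exactly the claim of the lemma. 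Since your argument in the third paragraph (``every $\upl_j$ precedes every $\upl_{j+1}$'') is consistent with the correct direction, this is a wording error rather than a conceptual one, but it should be repaired before the proof is finalized.
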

\begin{Lemma}\label{observation2}
Suppose $j\in \leg(w)$ for some $j\geq 1$.  Then
\begin{itemize}
\item the greatest entry  in column $j$ of $\tau$ is strictly greater than than the greatest entry in column $j+1$ in $\tau$, and
\item all other entries in column $j$ of $\tau$ are strictly smaller than the smallest entry in column $j+1$ in $\tau$.
\end{itemize}
\end{Lemma}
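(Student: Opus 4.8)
The plan is to read the statement directly off the bijection of Subsection~\ref{maximalchainbijection}. Writing $w=\upl_{i_1}\cdots\upl_{i_n}$, the box of $\tau$ created when $\upl_{i_p}$ is applied is the $(n-p+1)$-st box added along the associated maximal chain, so it receives the entry $p$ and lies in column $i_p$. Hence the entries of $\tau$ sitting in a fixed column $c$ are exactly the positions $p$ with $i_p=c$, and the lemma turns into an assertion about where the letters $j$ and $j+1$ occur in the word $i_1,\dots,i_n$. The first step is to pin those positions down.

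Since $w$ is a connected reverse $k$-hookword we have $i_1\le\cdots\le i_{k+1}>i_{k+2}>\cdots>i_n$ and $\supp(w)$ is an interval; as $j\in\supp(w)$ is not its greatest element, $j+1\in\supp(w)$, so $\upl_{j+1}$ does occur in $w$. Next I would observe that $j<i_{k+1}$: the letter $i_{k+1}$ dominates all of $i_1,\dots,i_{k+1}$ and all of $i_{k+1},\dots,i_n$, so it is the largest letter of $w$; were $j$ equal to it, $j$ would be the maximum of $\supp(w)$, contradicting the hypothesis. Since $j\in\leg(w)=\{i_{k+1},\dots,i_n\}$ and $j\ne i_{k+1}$, the letter $j$ occurs among $i_{k+2},\dots,i_n$, and the strict decrease there forces a single occurrence, say $i_m=j$ with $k+2\le m\le n$. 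Any further occurrence of $j$ lies in the arm, at positions forming a (possibly empty) block $\{a,\dots,b\}$, and $b\le k$ because $i_{k+1}\ne j$. Thus the entries of $\tau$ in column $j$ are precisely $\{a,\dots,b\}\cup\{m\}$, whose maximum is $m$ since $m\ge k+2>k\ge b$.

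Then it remains to locate $j+1$. An occurrence of $j+1$ at a position $\le k+1$ must, by weak monotonicity of $i_1\le\cdots\le i_{k+1}$ and $j<j+1$, lie strictly to the right of every arm occurrence of $j$, hence at a position $>b$ (and $<k+2\le m$). An occurrence of $j+1$ at a position in $\{k+2,\dots,n\}$ must, by the strict decrease $i_{k+2}>\cdots>i_n$ and $j+1>j=i_m$, lie at a position in $\{k+2,\dots,m-1\}$ (in particular $>k\ge b$ and $<m$). So every entry of $\tau$ in column $j+1$ is strictly between $b$ and $m$, i.e. lies in $\{b+1,\dots,m-1\}$. The first bullet follows: the greatest column-$j$ entry is $m$, while every column-$(j+1)$ entry is $\le m-1$. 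For the second bullet, the ``other'' column-$j$ entries are exactly the elements of $\{a,\dots,b\}$, each $\le b$, and every column-$(j+1)$ entry is $\ge b+1$; hence each ``other'' column-$j$ entry is strictly smaller than every column-$(j+1)$ entry, in particular than the smallest one. (If $j$ has no arm occurrence the block is empty and the second bullet is vacuous.)

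With the labelling fixed, the whole argument is bookkeeping with the hookword inequalities, so I expect no serious obstacle; the one place that genuinely uses the hypothesis, and that I would take care to justify cleanly, is the inequality $j<i_{k+1}$ --- that ``$j$ is not the greatest element of $\supp(w)$'' forces the distinguished leg occurrence $i_m=j$ to sit strictly past the peak position $k+1$. It is exactly this that isolates the single late-created box of column $j$, the one carrying entry $m$, from all of the boxes of column $j+1$ and from the remaining boxes of column $j$.
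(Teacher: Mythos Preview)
Your argument is correct and is precisely the unpacking of the bijection in Subsection~\ref{maximalchainbijection} that the paper has in mind when it calls Lemmas~\ref{observation1} and~\ref{observation2} ``immediate consequences'' thereof. The only point to state a touch more carefully is the empty-block case: when $j$ has no arm occurrence you should simply argue the first bullet directly from $q\le k+1<m$ (arm) and $q<m$ (leg), as you in fact do, rather than route through the symbol~$b$.
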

We will note down an important corollary of the two lemmas above.
\begin{Corollary}\label{notgreatersmallerthaneverythingelse}
For all $j\geq 1$, every entry in column $j$ except the greatest one, is strictly smaller than the smallest entry in column $j+1$.
\end{Corollary}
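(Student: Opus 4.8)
The plan is to deduce this directly from Lemmas \ref{observation1} and \ref{observation2} by a case split on whether $j$ lies in $\leg(w)$. First I would dispose of the degenerate situations: if column $j$ of $\tau$ is empty there is nothing to prove, and if column $j+1$ is empty the assertion is vacuous since there is no "smallest entry in column $j+1$" to speak of. So I may assume both columns are nonempty. Since $w\in CRHW_n$, the set $\supp(w)=\supp(\beta/\!\!/\alpha)$ is an interval in $\mathbb{Z}_{\geq 1}$, so $j$ and $j+1$ both belong to it; in particular $j$ is an element of $\supp(\beta/\!\!/\alpha)$ that is not its greatest element, which is exactly the standing hypothesis under which Lemmas \ref{observation1} and \ref{observation2} were stated.

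Now if $j\notin\leg(w)$, Lemma \ref{observation1} gives that the greatest entry in column $j$ of $\tau$ — hence a fortiori every entry in column $j$ — is strictly smaller than the smallest entry in column $j+1$, which is even stronger than what is claimed. If instead $j\in\leg(w)$, the second bullet of Lemma \ref{observation2} says precisely that every entry of column $j$ other than the greatest one is strictly smaller than the smallest entry of column $j+1$. In either case the conclusion holds, completing the argument.

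There is no substantial obstacle here, as this is a genuine corollary of the two preceding lemmas; the only points requiring care are verifying that the hypotheses of those lemmas are actually met — this is where connectedness of $w$ is used, via $\supp(w)$ being an interval — and treating the empty-column boundary cases so that the statement "for all $j\geq 1$" is literally correct.
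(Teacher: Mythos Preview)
Your proposal is correct and follows exactly the approach the paper intends: the paper presents this as an immediate corollary of Lemmas \ref{observation1} and \ref{observation2} without further proof, and your case split on whether $j\in\leg(w)$, together with the handling of the empty-column and interval-support details, is precisely the intended (and only natural) way to extract the conclusion from those two lemmas.
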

\begin{Lemma}
\begin{eqnarray*}
A_{\alpha,n}=B_{\alpha,n}.
\end{eqnarray*}
\end{Lemma}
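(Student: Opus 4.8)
The plan is to note that Lemma~\ref{reversehookwordgivenncborderstrip} already gives the inclusion $B_{\alpha,n}\subseteq A_{\alpha,n}$, so only $A_{\alpha,n}\subseteq B_{\alpha,n}$ needs to be proved. I would fix $\beta\in A_{\alpha,n}$, choose $w\in CRHW_n$ with $w(\alpha)=\beta$, and let $\tau$ be the SRCT of shape $\beta/\!\!/\alpha$ corresponding to $w$ in the sense of Subsection~\ref{maximalchainbijection}. Two features are then immediate: since $w$ has length $n$ and $w(\alpha)\neq 0$, each of its $n$ operators genuinely adds a box, so $\lvert\beta/\!\!/\alpha\rvert=n$; and since $\upl_i$ adds a box in column $i$, the columns met by $\beta/\!\!/\alpha$ are exactly the elements of $\supp(w)$, which is an interval because $w$ is connected. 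Hence $\beta/\!\!/\alpha$ is an interval shape of size $n$, and all that remains is to verify the two conditions defining an nc border strip.

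For this I would run a single contradiction argument covering both conditions. First I would record where the largest entry of $\tau$ lives in each column: for $j\geq 2$, Lemma~\ref{orderofentriesincolumns} says the entries of $\tau$ in column $j$ strictly decrease from top to bottom, so the largest entry of column $j$ occupies its topmost box; for $j=1$, the SSRCT axiom that the first column strictly increases from top to bottom puts the largest entry of column $1$ in its bottommost box. Now suppose an nc-border-strip condition fails: either $(i,1),(i,2)\in\beta/\!\!/\alpha$ with $(i,1)$ not the bottommost box of column $1$, or $(i,j),(i,j+1)\in\beta/\!\!/\alpha$ with $j\geq 2$ and $(i,j)$ not the topmost box of column $j$. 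In either case $(i,j)$ (with $j=1$, resp.\ $j\geq 2$) is not the box of its column carrying the largest entry. Because $(i,j+1)\in\beta/\!\!/\alpha$ we have $j,j+1\in\supp(\beta/\!\!/\alpha)$, so $j$ is not the maximal element of $\supp(\beta/\!\!/\alpha)$, and Corollary~\ref{notgreatersmallerthaneverythingelse} applies: since $\tau_{(i,j)}$ is not the largest entry of column $j$, it is strictly smaller than the smallest entry of column $j+1$, so in particular $\tau_{(i,j)}<\tau_{(i,j+1)}$. This contradicts the fact that rows of $\tau$ weakly decrease from left to right. Hence both conditions hold, $\beta/\!\!/\alpha$ is an nc border strip of size $n$, so $\beta\in B_{\alpha,n}$, and therefore $A_{\alpha,n}=B_{\alpha,n}$.

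I do not expect a substantial obstacle: the argument collapses to a three-line contradiction once the bookkeeping is in place. The one point demanding care is that column $1$ and columns $j\geq 2$ must be handled with opposite orientations---bottommost box versus topmost box---mirroring the deliberately asymmetric phrasing of the two nc-border-strip conditions. This asymmetry is forced by the operator $\upl_1$ prepending an entire new row (rather than extending an existing one), and it is already encoded in the column axioms for SSRCTs and in Lemmas~\ref{observation1}--\ref{observation2} together with Corollary~\ref{notgreatersmallerthaneverythingelse}. Selecting the correct ``extremal box'' in each column is thus the only place the proof could go wrong.
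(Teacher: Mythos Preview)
Your proof is correct and follows essentially the same approach as the paper: both use Lemma~\ref{reversehookwordgivenncborderstrip} for one inclusion and, for the other, reduce the nc-border-strip conditions to the observation that a horizontally adjacent pair forces $\tau_{(i,j)}$ to be the greatest entry of its column via Corollary~\ref{notgreatersmallerthaneverythingelse}, then invoke Lemma~\ref{orderofentriesincolumns} (and the first-column axiom) to locate that greatest entry. The only cosmetic difference is that you phrase the key step as a contradiction while the paper phrases it directly.
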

\begin{proof}
We have already shown in Lemma \ref{reversehookwordgivenncborderstrip} that every element of $B_{\alpha,n}$ is an element of $A_{\alpha,n}$. Thus, now we need to show that $A_{\alpha,n}\subseteq B_{\alpha,n}$.

Assume $\beta\in A_{\alpha,n}$. We have to show that $\beta\in B_{\alpha,n}$ as well. It is clear that if $\beta=w(\alpha)$ for some $w\in CRHW_n$, then $\alpha <_{c}\beta$.   The fact that  $\beta/\!\!/\alpha$ is an interval shape of size $n$ follows from the fact that $w$ is connected and has length $n$. Let $\tau$ denote the SRCT of shape $\beta/\!\!/\alpha$ corresponding to $w$.

Next, suppose there is a configuration of the following type. 
\begin{eqnarray*}
\ytableausetup{mathmode,boxsize=2.8em}
\begin{ytableau}
\myb\tau_{(i,j)}& \myb\tau_{(i,j+1)}
\end{ytableau}
\end{eqnarray*}
If $\tau_{(i,j)}$ is not the greatest entry in column $j$, then Corollary \ref{notgreatersmallerthaneverythingelse} implies that $\tau_{(i,j)}<\tau_{(i,j+1)}$, which contradicts the fact that $\tau$ is an SRCT. Hence $\tau_{(i,j)}$ is the greatest entry in column $j$. Hence the box in position $(i,j)$ is the bottommost box in column $j$ if $j=1$, or the topmost box in column $j$ otherwise, by invoking Lemma \ref{orderofentriesincolumns}.

This finishes the proof that $\beta/\!\!/\alpha$ is an nc border strip if $\beta\in A_{\alpha,n}$. Thus $A_{\alpha,n}\subseteq B_{\alpha,n}$, and we are done.
\end{proof}

We will outline our strategy for the remainder of this section. Now that we have established that $A_{\alpha,n}=B_{\alpha,n}$, our next aim is to enumerate all hookwords $w$ such that $w(\alpha)=\beta$ where $\beta\in B_{\alpha,n}$, i.e. $\beta/\!\!/\alpha$ is an nc border strip of size $n$. This will be achieved in Lemma \ref{cardinalityvalidhookwords}. Once this is done, we will be able to compute the coefficient $$k_{\beta}=\displaystyle\sum_{\substack{w\in CRHW_n\\w(\alpha)=\beta}}(-1)^{\asc (w)}.$$ Recall that $k_{\beta}$ was defined to be the coefficient of $\ncsb$ in the expansion $\Psi_n\cdot \ncsa$. This will give us a noncommutative analogue of the Murnaghan-Nakayama rule that we will state in Theorem \ref{MurNakfinal}.
\begin{Lemma}\label{marriedimpliesleg}
Given $\beta\in B_{\alpha,n}$, if for any $w\in CRHW_n$ such that $w(\alpha)=\beta$ we have that column $j\in E(\beta/\!\!/\alpha)$, then $j\in \leg(w)$.
\end{Lemma}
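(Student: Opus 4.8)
The plan is to work directly with the SRCT $\tau$ of shape $\beta/\!\!/\alpha$ corresponding to a given $w\in CRHW_n$ with $w(\alpha)=\beta$, and to argue that if $j\in E(\beta/\!\!/\alpha)$ then the definition of the arm/leg decomposition of a reverse hookword forces $j\in\leg(w)$. Recall that $\leg(w)=\{i_j:k+1\le j\le n\}$ is exactly the set of indices occurring in the (strictly decreasing) tail of the reverse $k$-hookword, and that the correspondence of Section \ref{maximalchainbijection} places larger integers in $\tau$ in boxes added earlier, i.e. the boxes added by the tail carry the \emph{smaller} entries. So the claim ``$j\in\leg(w)$'' is equivalent to saying that in column $j$ there is at least one box whose entry is small enough to have been placed during the decreasing-tail phase.

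The first step is to recall that $j\in E(\beta/\!\!/\alpha)$ means there is a row $i$ with both $(i,j)$ and $(i,j+1)$ boxes of $\beta/\!\!/\alpha$. Since $\tau$ is an SRCT, its rows weakly decrease, so $\tau_{(i,j)}>\tau_{(i,j+1)}$ (strictly, as entries are distinct). Now I would invoke Corollary \ref{notgreatersmallerthaneverythingelse}: every entry of column $j$ other than the greatest one is strictly smaller than the smallest entry of column $j+1$. If $\tau_{(i,j)}$ were \emph{not} the greatest entry in column $j$, Corollary \ref{notgreatersmallerthaneverythingelse} would give $\tau_{(i,j)}<\tau_{(i,j+1)}$, contradicting the row-decrease just established. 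Hence $\tau_{(i,j)}$ is the greatest entry in column $j$.

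Next I would feed this into Lemma \ref{observation2} versus Lemma \ref{observation1}. Suppose, for contradiction, that $j\notin\leg(w)$; since $j\in\supp(w)$ and is not the maximal element of $\supp(w)$ (because $j+1$ also lies in the support), Lemma \ref{observation1} applies and tells us that the greatest entry of column $j$ is strictly \emph{less than} the smallest entry of column $j+1$. In particular $\tau_{(i,j)}<\tau_{(i,j+1)}$, again contradicting the row-decrease coming from $j\in E(\beta/\!\!/\alpha)$. Therefore $j\in\leg(w)$, as desired. I would also remark that the hypotheses needed to apply Lemmas \ref{observation1} and \ref{observation2} — namely that $j\in\supp(\beta/\!\!/\alpha)$ and $j$ is not the greatest element of the support — hold here precisely because the presence of a box in column $j+1$ (from $j\in E$) shows $j+1\in\supp(\beta/\!\!/\alpha)$, forcing $j$ to be non-maximal.

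The only mild subtlety — and the step I would be most careful about — is the bookkeeping that $j\in E(\beta/\!\!/\alpha)$ genuinely yields a row $i$ with $(i,j),(i,j+1)\in\beta/\!\!/\alpha$ both outer boxes (not boxes of the inner shape $\alpha$), so that $\tau_{(i,j)}$ and $\tau_{(i,j+1)}$ are both defined and the SRCT row condition applies to them. This is immediate from the definition of ``column $j+1$ is east of column $j$,'' but it is worth stating explicitly since everything downstream hinges on having that honest row-comparison inside $\beta/\!\!/\alpha$. Once that is pinned down, the argument is just the two-way contradiction above.
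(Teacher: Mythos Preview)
Your formal argument is correct and rests on the same pivot as the paper's proof: from $j\in E(\beta/\!\!/\alpha)$ you extract a row with $\tau_{(i,j)}>\tau_{(i,j+1)}$, and this inequality is incompatible with $j\notin\leg(w)$. The paper simply reads this inequality as ``there exist $p<q$ with $i_p=j+1$ and $i_q=j$'' and observes that in a reverse hookword this forces $q\ge k+1$, hence $j\in\leg(w)$; you instead package that same reasoning as an appeal to Lemma~\ref{observation1}, which is fine.

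Two small points. First, your opening explanatory sentence is backwards: the tail $\upl_{i_{k+1}}\cdots\upl_{i_n}$ is applied \emph{first} (it sits on the right of $w$), so under the correspondence of Section~\ref{maximalchainbijection} the boxes it adds carry the \emph{larger} entries $k+1,\ldots,n$, not the smaller ones; thus ``$j\in\leg(w)$'' is equivalent to column $j$ containing a sufficiently \emph{large} entry. This slip does not affect your formal steps, which never use that sentence. Second, the detour through Corollary~\ref{notgreatersmallerthaneverythingelse} to show $\tau_{(i,j)}$ is the column-$j$ maximum is unnecessary: once you have $\tau_{(i,j)}>\tau_{(i,j+1)}$, Lemma~\ref{observation1} (under the hypothesis $j\notin\leg(w)$) already gives the contradiction directly, since it bounds \emph{every} entry of column $j$ below every entry of column $j+1$.
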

\begin{proof}
Let $\tau$ be the SRCT of shape $\beta/\!\!/\alpha$ corresponding to the word $w=\upl_{i_1}\cdots\upl_{i_n}$.
If $j\in E(\beta/\!\!/\alpha)$, then there exists a configuration in $\beta/\!\!/\alpha$ of the following form.
\begin{eqnarray*}
\ytableausetup{mathmode,boxsize=2.8em}
\begin{ytableau}
\myb\tau_{(i,j)}& \myb\tau_{(i,j+1)}
\end{ytableau}
\end{eqnarray*}
We know that $\tau_{(i,j)} > \tau_{(i,j+1)}$, and this implies that there exists $1\leq p<q\leq n$ such that $\upl_{i_p}=\upl_{j+1}$ and $\upl_{i_q}=\upl_{j}$.  But since $w$ is a reverse hookword, this immediately implies that $j\in \leg(w)$.
\end{proof}

\begin{Lemma}\label{dominationimpliesnotinleg}
Given $\beta\in B_{\alpha,n}$, if for any $w\in CRHW_n$ such that $w(\alpha)=\beta$  we have that column $j\in SE(\beta/\!\!/\alpha)$, then $j\notin \leg(w)$.
\end{Lemma}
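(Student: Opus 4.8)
The plan is to argue by contradiction, mirroring the logic of Lemma \ref{marriedimpliesleg}. Suppose $w=\upl_{i_1}\cdots\upl_{i_n}\in CRHW_n$ satisfies $w(\alpha)=\beta$, that $j\in SE(\beta/\!\!/\alpha)$, and yet $j\in\leg(w)$; I will derive a contradiction with the structure of the SRCT $\tau$ of shape $\beta/\!\!/\alpha$ corresponding to $w$. Since $j\in SE(\beta/\!\!/\alpha)$, by definition $j\notin E(\beta/\!\!/\alpha)$, so there is no box of $\beta/\!\!/\alpha$ in position $(i,j+1)$ whenever $(i,j)\in\beta/\!\!/\alpha$ sits in the same row; and there do exist boxes $(i_2,j)$ and $(i_1,j+1)$ in $\beta/\!\!/\alpha$ with $i_1>i_2$, i.e. column $j+1$ has a box strictly south of some box in column $j$. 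The key input is Lemma \ref{observation2}: if $j\in\leg(w)$, then the greatest entry of column $j$ in $\tau$ is strictly greater than the greatest entry of column $j+1$, while every other entry of column $j$ is strictly smaller than the smallest entry of column $j+1$.

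The main step is then to locate the offending triple rule violation. Let $a$ be the box in column $j$ holding the greatest entry of that column in $\tau$; by Lemma \ref{orderofentriesincolumns} (for $j\ge 2$) $a$ is the topmost box of column $j$, and for $j=1$ it is the bottommost box, so in either case there is a box of column $j$ weakly below $a$ that is \emph{not} $a$ — here is where I must be a little careful. Since $j\in SE(\beta/\!\!/\alpha)$ there is a box $(i_1,j+1)$ strictly south-east of a box in column $j$; pick such a box $b=(i_1,j+1)$. Because $j\notin E(\beta/\!\!/\alpha)$, the box $(i_1,j)$ in the same row as $b$ is \emph{not} in $\beta/\!\!/\alpha$ — it is either in the inner shape $\alpha$ or is a $0$-box. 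Now I examine the triple configuration formed by $b=(i_1,j+1)$ together with whatever occupies $(i',j)$ and $(i',j+1)$ for the appropriate $i'<i_1$: using that $b$ lies strictly south of a column-$j$ box, there is a box $(i',j)\in\beta/\!\!/\alpha$ with $i'<i_1$ whose entry, being not the greatest in column $j$ (unless it happens to be $a$, which I handle separately), is strictly smaller than $\tau(b)$ by Lemma \ref{observation2}; simultaneously $(i',j+1)$ is either empty or, being an entry of column $j+1$ other than the top one when $b$ is below it, satisfies $\tau(i',j+1)<\tau(b)$ only if it is not the top entry — so I reduce to the case where $\tau(i',j)$ \emph{is} the top (greatest) entry $a$ of column $j$, and then $\tau(a)>\tau(i',j+1)\ge 0$ while the bottom part of column $j$ combined with $b$ produces a violation of the second triple rule type. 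In short: the hypothesis $j\in\leg(w)$ forces column $j$'s entries to straddle column $j+1$'s entries in a way incompatible with having a column-$(j+1)$ box sitting strictly south of a column-$j$ box while no row contains boxes in both columns $j$ and $j+1$, and this incompatibility manifests as a triple rule violation in $\tau$, contradicting that $\tau$ is an SRCT.

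The hard part will be bookkeeping the case analysis on whether the relevant column-$j$ box is the unique ``greatest'' one or not, and handling $j=1$ versus $j\ge 2$ separately (in the $j=1$ case ``greatest entry = bottommost box'' by the first-column increasing condition, so the geometry of ``south-east'' interacts differently with which entry is large); I expect one has to invoke Lemma \ref{orderofentriesincolumns} and Corollary \ref{notgreatersmallerthaneverythingelse} to pin down positions, then read off the forbidden configuration
\begin{eqnarray*}
\ytableausetup{mathmode,boxsize=1.5em}
\begin{ytableau}
\myb x & \myb y\\ \none & \none[\vdots]\\\none & \myb z
\end{ytableau}
\text{ with } x\ge z>y\ge 0
\end{eqnarray*}
or its one-box-inner-shape variant. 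Once the contradiction is exhibited, the conclusion $j\notin\leg(w)$ follows immediately.
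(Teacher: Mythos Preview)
Your plan is essentially the contrapositive of the paper's argument, and it can be made to work, but the execution you have sketched drifts in places. The paper argues directly, without contradiction: let $(i,j)$ be the box holding the greatest entry of column $j$ in $\tau$ (the bottommost box for $j=1$, the topmost for $j\ge 2$ by Lemma \ref{orderofentriesincolumns}); since $j\in SE(\beta/\!\!/\alpha)$ there is a box $(i',j+1)\in\beta/\!\!/\alpha$ with $i'>i$; since $j\notin E(\beta/\!\!/\alpha)$ and $(i,j)\in\beta/\!\!/\alpha$, the position $(i,j+1)$ lies outside $\beta$ entirely and so carries value $0$. The SRCT triple rule applied to the triple $(i,j),(i,j+1),(i',j+1)$ then forces $\tau_{(i',j+1)}>\tau_{(i,j)}$, so some entry in column $j+1$ exceeds every entry in column $j$; for a reverse hookword this immediately gives $j\notin\leg(w)$. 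No appeal to Lemma \ref{observation2} is needed.

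Your version starts from $j\in\leg(w)$, invokes Lemma \ref{observation2} to get $\tau(a)$ greater than every column-$(j+1)$ entry, and hunts for a triple rule violation. Two things go wrong in the write-up. First, the box $(i_1,j)$ you examine is irrelevant; the triple that yields the violation is $(i,j),(i,j+1),(i_1,j+1)$ with $a=(i,j)$, so you must analyse $(i,j+1)$, not $(i_1,j)$. Second, for the violation to go through you must check that $(i,j+1)$ is neither in $\beta/\!\!/\alpha$ (this is where $j\notin E$ is used) \emph{nor} in the inner shape $\alpha$ (otherwise condition (3) of the SSRCT definition is satisfied and there is no violation); the latter follows because $(i,j)\notin\alpha$ and $\alpha$ is left-justified, but your plan never establishes it. Finally, the remark about ``a box of column $j$ weakly below $a$ that is not $a$'' and the case split on whether $(i',j)$ equals $a$ are unnecessary detours: take $i'=i$ from the outset and the argument is one line.
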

\begin{proof}
Let $\tau$ be the SRCT of shape $\beta/\!\!/\alpha$ corresponding to the word $w$.
Assume first that $j=1$. Let the bottommost box in the first column of $\beta/\!\!/\alpha$ be in position $(i,1)$. Since $1\in SE(\beta/\!\!/\alpha)$, we know that all boxes in column $2$ that belong to $\beta/\!\!/\alpha$ lie strictly southeast of the box in position $(i,1)$. Notice that $\tau_{(i,1)}$ is the greatest amongst all the numbers in column $1$ of $\tau$. We have the following configuration in $\tau$.
\begin{eqnarray*}
\ytableausetup{mathmode,boxsize=2em}
\begin{ytableau}
\myb \tau_{(i,1)}\\\none[\vdots]\\ \cw & \myb\tau_{(i',2)}
\end{ytableau}
\end{eqnarray*}
Since $\tau$ is an SRCT, we know that $\tau_{(i',2)}>\tau_{(i,1)}$. Thus in particular, the smallest entry in the second column of $\tau$ is strictly greater than $\tau_{(i,1)}$. Thus, the first time a box is added in the first column happens after all the boxes that were to be added in the second column by $w$ have been added. Hence $1\notin \leg(w)$.

Now assume $j\geq 2$. Let $\tau_{(i,j)}$ be the entry in the topmost box of the $j$-th column of $\tau$. This entry is greater than every other entry in the $j$-th column of $\tau$ by Lemma \ref{orderofentriesincolumns}. Since $j\in SE(\beta/\!\!/\alpha)$, we are guaranteed the existence of a box in position $(i',j+1)$ where $i'>i$. Arguing like before, we must have $\tau_{(i',j+1)}>\tau_{(i,j)}$. Hence there is at least one entry in the $j+1$-th column of $\tau$ that is greater than every entry in the $j$-th column of $\tau$. Given that $w$  is a reverse hookword, this implies that $j\notin \leg(w)$.
\end{proof}

\begin{Lemma}\label{submissionjgeq2}
Let $\beta\in B_{\alpha,n}$. If $j\in NE(\beta/\!\!/\alpha)$, then $j\geq 2$.
\end{Lemma}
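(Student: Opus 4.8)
The statement to prove is that if $\beta \in B_{\alpha,n}$ and $j \in NE(\beta/\!\!/\alpha)$, then $j \geq 2$. Equivalently, we must rule out the possibility that $1 \in NE(\beta/\!\!/\alpha)$. The plan is to argue by contradiction: assume $1 \in NE(\beta/\!\!/\alpha)$ and derive a contradiction with the fact that $\beta/\!\!/\alpha$ is an nc border strip obtained as $w(\alpha)$ for some $w \in CRHW_n$.

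First I would unpack what $1 \in NE(\beta/\!\!/\alpha)$ means: column $2$ is north-east of column $1$, i.e., for every pair of boxes $(i_1,2)$ and $(i_2,1)$ in $\beta/\!\!/\alpha$ we have $i_1 < i_2$, and column $2$ is nonempty. In particular the topmost box of column $1$ of $\beta/\!\!/\alpha$ lies strictly below the bottommost box of column $2$ of $\beta/\!\!/\alpha$. Now here is the key obstruction. Since $\alpha <_c \beta$ and $(i,1) \in \beta/\!\!/\alpha$ for the topmost such $i$, the box $(i,1)$ was added by some $\upl_1$ in $w$; but a box can only be added in column $1$ by the cover relation $\beta' \lessdot_c (1,\beta')$, which prepends a new first part. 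This means that at the moment $\upl_1$ is applied, it creates a brand-new row $i$ with a single box in column $1$, and this row sits \emph{above} all previously existing rows that had started further down — more precisely, the relative vertical order of rows is governed by when they were created. The crucial point: for a box to appear in column $2$ in a row strictly above row $i$, that row must have been created before row $i$, hence that column-$2$ box is added by a $\upl_2$ operator appearing \emph{later} in $w$ (reading $w = \upl_{i_1}\cdots\upl_{i_n}$, "later" meaning larger index position, since operators act right-to-left) than the $\upl_1$ that created row $i$. But then $w$ contains $\upl_2$ somewhere, then $\upl_1$ to its left... wait — I need to be careful about the reverse-hookword order. Let me instead phrase it via Lemma \ref{orderofentriesincolumns} and the $\arm/\leg$ structure: in an SRCT corresponding to $w$, if $1 \in \supp(w)$ and column $2$ is nonempty, the interaction is controlled by whether $1 \in \leg(w)$ or $1 \in \arm(w)$, and the nc-border-strip condition (1) forces the box in column $1$ adjacent to a column-$2$ box to be bottommost, which is incompatible with the "all column-$2$ boxes strictly below all column-$1$ boxes" configuration unless column $2$ is empty.

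The cleanest route, and the one I would actually write, is: suppose $1 \in NE(\beta/\!\!/\alpha)$. Then column $2$ of $\beta/\!\!/\alpha$ is nonempty, so $2 \in \supp(\beta/\!\!/\alpha)$ and $1$ is not the greatest element of $\supp(\beta/\!\!/\alpha)$, so Corollary \ref{notgreatersmallerthaneverythingelse} and Lemmas \ref{observation1}, \ref{observation2} apply. By Lemma \ref{orderofentriesincolumns} the column-$1$ entries of $\tau$ strictly increase from top to bottom, so the bottommost box $(i,1)$ in column $1$ carries the \emph{largest} entry of column $1$. If $1 \notin \leg(w)$, Lemma \ref{observation1} gives that this largest column-$1$ entry is strictly less than the smallest column-$2$ entry, which forces all column-$2$ boxes to be added after all column-$1$ boxes; tracing the chain, this puts every column-$2$ box strictly south-east (not north-east) of the column-$1$ boxes — contradiction with north-east. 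If $1 \in \leg(w)$, Lemma \ref{observation2} says the largest column-$1$ entry exceeds the largest column-$2$ entry while all other column-$1$ entries are below the smallest column-$2$ entry; examining the chain of box additions, the $\upl_1$ carrying the largest entry is applied last among column-$1$ additions and after the $\upl_2$'s, creating the bottommost column-$1$ box below the already-present column-$2$ boxes — again south-east, contradicting north-east. Either way we reach a contradiction, so $1 \notin NE(\beta/\!\!/\alpha)$, i.e. $j \geq 2$. The main obstacle I anticipate is making the "tracing the chain of box additions" step rigorous — precisely pinning down the geometric position (which row, relative to existing rows) of a box added by $\upl_1$ versus $\upl_2$, and translating the entry-order statements of Lemmas \ref{observation1}–\ref{observation2} into the north-east/south-east dichotomy without hand-waving; this is really a bookkeeping argument about the reverse composition poset $\mathcal{L}_c$ and the maximal chain associated to $w$.
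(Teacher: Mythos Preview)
Your strategy is exactly the paper's: assume $1\in NE(\beta/\!\!/\alpha)$, pick $w\in CRHW_n$ with $w(\alpha)=\beta$, split on whether $1\in\leg(w)$, and show that in each case $1$ actually lies in $E(\beta/\!\!/\alpha)$ or $SE(\beta/\!\!/\alpha)$. The paper obtains $1\in E$ when $1\in\leg(w)$ and $1\in SE$ when $1\notin\leg(w)$.

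Your execution, though, has a recurring inversion when converting tableau entries into timing: in the SRCT attached to $w$, a \emph{larger} entry means the box was added \emph{earlier} (the $i$-th box added receives label $n-i+1$). In the $1\notin\leg(w)$ case you write ``all column-$2$ boxes to be added after all column-$1$ boxes'' when Lemma~\ref{observation1} forces the opposite; you nonetheless land on the correct conclusion $1\in SE$, matching the paper's argument via the factorization $w=\upl_1^k w_1$. The substantive slip is in the $1\in\leg(w)$ case. There the leg's $\upl_1$ is $\upl_{i_n}$ and is applied \emph{first}, not ``after the $\upl_2$'s''. It creates a new topmost row of length~$1$, and the first subsequent $\upl_2$ necessarily adds its box to that very row (being the leftmost part equal to~$1$), so the two boxes share a row and $1\in E(\beta/\!\!/\alpha)$, not $SE$ as you claim. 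The geometric picture you describe---the bottommost column-$1$ box sitting below already-present column-$2$ boxes---is false here. Since $E$ contradicts $NE$ just as well, the overall scheme survives once the timing is corrected, and the paper's one-line argument for this case is exactly that correction.
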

\begin{proof}
Let $w\in CRHW_n$ be such that $w(\alpha)=\beta$.
Assume that the claim is not true. Hence $1\in NE(\beta/\!\!/\alpha)$. Thus, we must have that $\{1,2\}\subseteq \supp(\beta/\!\!/\alpha)$. If $1\in \leg(w)$, then the way the box-adding operators act implies that $1\in E(\beta/\!\!/\alpha)$, as $\upl_2$ will add a box adjacent to a newly added box resulting from the operator $\upl_1$ applied before it.

Now, assume $1\notin \leg(w)$. Then $w$ factors uniquely as $\upl_{1}^kw_1$, where $w_1$ has no instances of $\upl_1$. Then we know that in computing $\upl_{1}^k(w_1(\alpha))$, the boxes added in column $1$ will be strictly northwest of the boxes added in column $2$ while computing $w_1(\alpha)$. This also follows from how $\upl_1$ acts. But then $1\in SE(\beta/\!\!/\alpha)$. Again, this is a contradiction. 
Hence we must have that $j\geq 2$.
\end{proof}

\begin{Lemma}\label{commutativityconsecutive}
Let $j\geq 2$ be a positive integer and $\mu$ be a composition. Suppose that $\mu$ has parts equalling $j$ and $j-1$ and that the number of parts that equal $j$ and lie to the left of the leftmost instance of a part equalling $j-1$ is $m$. Then for all $0\leq k\leq m$, we have 
\begin{eqnarray*}
\upl_j\upl_{j+1}^{k}(\mu)=\upl_{j+1}^{k}\upl_j(\mu).
\end{eqnarray*}
\end{Lemma}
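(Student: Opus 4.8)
The plan is to prove the identity by writing down explicitly the composition produced by each side and checking that the two agree. Write $\mu = (\mu_1, \ldots, \mu_l)$, let $p$ be the position of the leftmost part of $\mu$ equal to $j-1$, and recall that by hypothesis exactly $m$ of the parts $\mu_1, \ldots, \mu_{p-1}$ equal $j$. The only facts I will use about the operators are that, for $j \geq 2$, the operator $\upl_j$ replaces the leftmost part equal to $j-1$ by $j$ (and returns $0$ if there is no such part), while $\upl_{j+1}$ replaces the leftmost part equal to $j$ by $j+1$; in particular neither operator creates a new part equal to $j-1$ or equal to $j$, and whether $\upl_{j+1}$ is applicable depends only on the presence of a part equal to $j$.

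First I would establish, by an immediate induction on $k$, that for $0 \leq k \leq m$ the composition $\upl_{j+1}^{\,k}(\mu)$ is nonzero and is obtained from $\mu$ by changing the left-to-right first $k$ parts equal to $j$ into $j+1$; since $k \leq m$ these $k$ parts all occur among $\mu_1, \ldots, \mu_{p-1}$, and the part in position $p$ is left untouched. (Nonvanishing at each step is clear: after $i < k$ increments there are still at least $m - i \geq 1$ parts equal to $j$ lying to the left of position $p$.) Because the part in position $p$ remains the leftmost part equal to $j-1$ in $\upl_{j+1}^{\,k}(\mu)$, applying $\upl_j$ afterwards changes it to $j$. Hence $\upl_j \upl_{j+1}^{\,k}(\mu)$ equals $\mu$ with those first $k$ parts equal to $j$ replaced by $j+1$ and with $\mu_p$ replaced by $j$.

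For the other order, applying $\upl_j$ to $\mu$ changes $\mu_p$ from $j-1$ to $j$ and does nothing else, so in $\upl_j(\mu)$ the parts equal to $j$, read from left to right, are exactly the $m$ original parts equal to $j$ in positions $< p$, then the part now in position $p$, then any parts equal to $j$ in positions $> p$. Since $k \leq m$, applying $\upl_{j+1}^{\,k}$ to $\upl_j(\mu)$ therefore increments precisely the first $k$ of those $m$ parts --- the very same positions as in the previous paragraph --- and never reaches position $p$. Thus $\upl_{j+1}^{\,k}\upl_j(\mu)$ is the same modification of $\mu$ as $\upl_j\upl_{j+1}^{\,k}(\mu)$, which is the claim (the case $k = 0$ being trivial, as both sides equal $\upl_j(\mu)$). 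The whole argument is a matter of bookkeeping; the one point to be careful about is that all the intermediate applications of $\upl_{j+1}$ are nonzero, and this is exactly what the hypothesis $k \leq m$ secures, so I expect no serious obstacle.
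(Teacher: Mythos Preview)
Your proof is correct and follows essentially the same approach as the paper's own argument: both observe that $\upl_{j+1}^{k}$ increments the $k$ leftmost parts equal to $j$, which all lie strictly left of position $p$ since $k\leq m$, so the application of $\upl_j$ at position $p$ commutes with these increments. Your version is simply more explicit in tracking both orders of application and verifying nonvanishing, whereas the paper compresses this into a single sentence.
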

\begin{proof}
The case where $m=0$ is trivial. Assume that $m\geq 1$.

The operator $\upl_{j+1}^{k}$ adds a box to each of the $k$ leftmost parts in $\mu$ that equal $j$. We know that there are $m$ parts in $\mu$ that equal $j$ and lie to the left of the leftmost instance of a part that equals $j-1$. Since $k\leq m$, we are guaranteed that $\upl_j\upl_{j+1}^{k}(\mu)=\upl_{j+1}^{k}\upl_j(\mu)$.
\end{proof}
\begin{Lemma}\label{submissionimpliesanything}
Suppose $\beta\in B_{\alpha,n}$ and that $j\in NE(\beta/\!\!/\alpha)$. Let $w\in CRHW_n$ such that $w(\alpha)=\beta$. Define an element $w'\in CRHW_n$ as follows.
\begin{itemize}
\item If $j\in \leg(w)$, let $w'$ be the unique reverse hookword with same content as $w$ obtained by setting $\leg(w')=\leg(w)\setminus \{j\}$. \item If $j\notin \leg(w)$, let $w'$ be the unique reverse hookword with same content as $w$ obtained by setting $\leg(w')=\leg(w)\cup \{j\}$. 
\end{itemize}
Then $w'(\alpha)=\beta$.
\end{Lemma}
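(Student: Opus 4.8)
The plan is to transform $w$ into $w'$ through a sequence of elementary moves, each swapping two adjacent box-adding operators in the word without altering the composition obtained by applying the word to $\alpha$; since $w(\alpha)=\beta$, this will give $w'(\alpha)=\beta$. By Lemma~\ref{submissionjgeq2} we have $j\geq 2$, and $j\notin E(\beta/\!\!/\alpha)$ (otherwise there would be boxes $(i,j)$ and $(i,j+1)$ in $\beta/\!\!/\alpha$, contradicting the defining inequality of $NE$). The key structural observation is that, because $j\notin E(\beta/\!\!/\alpha)$, no occurrence of $\upl_{j+1}$ in $w$ acts on a part that some $\upl_j$ has just lengthened to $j$ — such a part would carry boxes in columns $j$ and $j+1$ in the same row, forcing $j\in E(\beta/\!\!/\alpha)$. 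Hence every occurrence of $\upl_{j+1}$ in $w$ acts on a part that already had length $j$ in $\alpha$, and since $j\in NE(\beta/\!\!/\alpha)$ the rows of these parts — which are exactly the rows carrying the column-$(j+1)$ boxes of $\beta/\!\!/\alpha$ — all lie strictly above every row in which $w$ places a column-$j$ box.

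Next I would isolate the operator being moved. The words $w$ and $w'$ have the same content vector and differ only in the location of a single occurrence of $\upl_j$, say $D$: in one of the two words $D$ lies in the weakly increasing (arm) portion and in the other in the strictly decreasing (leg) portion. Since $D=\upl_j$ commutes with every $\upl_\ell$ with $\lvert\ell-j\rvert\geq 2$ by Lemma~\ref{modifiedlocalplactic}, in sliding $D$ from its place in $w$ to its place in $w'$ the only operators it genuinely must cross are the occurrences of $\upl_{j+1}$; moreover, $D$ never needs to be swapped with an occurrence of $\upl_{j-1}$, since in both $w$ and $w'$ it is applied after all $\upl_{j-1}$'s coming from the leg and before all $\upl_{j-1}$'s coming from the arm. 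Thus it suffices to move $D$ past the occurrences of $\upl_{j+1}$ one at a time, freely sliding it past the operators $\upl_\ell$, $\lvert\ell-j\rvert\geq 2$, that lie in between, and to check that each such crossing preserves the value at $\alpha$.

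Each individual crossing is handled by Lemma~\ref{commutativityconsecutive} with $k=1$. Let $\mu$ be the composition produced just before the relevant $\upl_{j+1}$ and $D$ are to be applied in the current arrangement of the word; I must verify that $\mu$ has a part equal to $j$ lying to the left of its leftmost part equal to $j-1$. The operators $D$ has so far been slid past are $\upl_{j+1}$'s and operators $\upl_\ell$ with $\lvert\ell-j\rvert\geq 2$, none of which affects parts of length $j-1$, and $D$ has not yet acted; so the leftmost part of $\mu$ equal to $j-1$ is exactly the part that $D$ will eventually lengthen, which by the definition of $D$ occupies a row of $\beta/\!\!/\alpha$ bearing a column-$j$ box. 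On the other hand, the $\upl_{j+1}$ about to be applied acts on a part of length $j$ occupying a row that bears a column-$(j+1)$ box of $\beta/\!\!/\alpha$, and by the first paragraph this row lies above, hence to the left of, the leftmost part of $\mu$ equal to $j-1$. This is precisely the hypothesis of Lemma~\ref{commutativityconsecutive}, so the crossing is legitimate; iterating over all occurrences of $\upl_{j+1}$ transforms $w$ into $w'$ and yields $w'(\alpha)=\beta$.

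The step I expect to be the main obstacle is this last one: as $D$ migrates through the word one must keep precise track of which intermediate composition $\mu$ is relevant and confirm both that $D$'s ultimate target is still the leftmost part of length $j-1$ in $\mu$ and that the $\upl_{j+1}$ to be crossed acts on a strictly higher row of length $j$ — in particular that the occurrences of $\upl_{j+1}$ consume the available original length-$j$ parts of $\alpha$ from top to bottom. The degenerate cases $\max\supp(w)=j+1$, in which the peak of the reverse hookword is itself a $\upl_{j+1}$, and $j-1\notin\supp(w)$, in which $D$'s target is a length-$(j-1)$ part already present in $\alpha$, require a brief separate comment but go through by the same reasoning.
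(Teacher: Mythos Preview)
Your approach is correct and essentially the same as the paper's: both arguments invoke Lemma~\ref{submissionjgeq2} to get $j\geq 2$, use Lemma~\ref{modifiedlocalplactic} to commute $\upl_j$ freely past operators with index $\geq j{+}2$, and appeal to Lemma~\ref{commutativityconsecutive} together with the $NE$ hypothesis to handle the $\upl_{j+1}$'s. The only difference is presentational: the paper lets $\mu=w_1(\alpha)$ be the image of the common suffix, observes once that $\mu$ has at least $c$ parts of length $j$ to the left of its leftmost part of length $j{-}1$ (where $c$ is the number of $\upl_{j+1}$'s), and invokes Lemma~\ref{commutativityconsecutive} in a single stroke with $k=c$, whereas you slide $D$ across one $\upl_{j+1}$ at a time and re-verify the $k=1$ hypothesis inductively---your version is more explicit about exactly the bookkeeping the paper compresses into its final sentence.
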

\begin{proof}
Since $j\in NE(\beta/\!\!/\alpha)$, we know that $j$ is not the maximum element of $\supp(\beta/\!\!/\alpha)$. Hence, either $j\in \leg(w)$ or $j\notin \leg(w)$. Thus, the word $w'$ is well-defined. Furthermore, by Lemma \ref{submissionjgeq2} we know that $j\geq 2$.

Let the number of $\upl_{j+1}$ in $w$ equal $c$ where $c\geq 1$. Let the largest common suffix of $w$ and $w'$ be $w_1$. All operators $\upl_k$ that appear in $w_1$ satisfy $k\leq j-1$. Let $\mu=w_1(\alpha)$. Since $j\in NE(\beta/\!\!/\alpha)$, we know that there are at least $c$ instances of a part of length $j$ to the left of the leftmost part of length $j-1$ in $\mu$. Thus $\upl_j\upl_{j+1}^m(\mu)=\upl_{j+1}^m\upl_j(\mu)$ for $0\leq m\leq c$ by Lemma \ref{commutativityconsecutive}. This combined with the fact that $\upl_j\upl_i=\upl_i\upl_j$ for $i-j\geq 2$ implies that $w(\alpha)=w'(\alpha)=\beta$, as required.
\end{proof}

\begin{Lemma}\label{cardinalityvalidhookwords}
If $\beta\in B_{\alpha,n}$, then the number of words $w\in CRHW_n$ such that $w(\alpha)=\beta$ equals $2^{\vert NE(\beta/\!\!/\alpha)\vert}$.
\end{Lemma}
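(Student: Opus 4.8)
The plan is to show that the data of a word $w\in CRHW_n$ with $w(\alpha)=\beta$ is determined by a binary choice at each column index in $NE(\beta/\!\!/\alpha)$, and that all $2^{\vert NE(\beta/\!\!/\alpha)\vert}$ such choices are realized. The content vector of any such $w$ is forced: since $\alpha<_c\beta$ and $\tau$ has shape $\beta/\!\!/\alpha$, the number of times $\upl_i$ appears in $w$ equals the number of boxes of $\beta/\!\!/\alpha$ in column $i$. So a word $w$ with $w(\alpha)=\beta$ is completely specified by its content vector (fixed) together with the set $\leg(w)\subseteq\supp(\beta/\!\!/\alpha)$; equivalently by deciding, for each $j\in\supp(\beta/\!\!/\alpha)$ that is not the maximum element $p$, whether $j\in\leg(w)$ or not. (The maximum $p$ always lies in both $\arm(w)$ and $\leg(w)$, so it contributes no freedom.)

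Next I would invoke the already-established structural lemmas to pin down all but the $NE$-columns. By the Remark, $\supp(\beta/\!\!/\alpha)\setminus\{p\}=E(\beta/\!\!/\alpha)\uplus SE(\beta/\!\!/\alpha)\uplus NE(\beta/\!\!/\alpha)$. Lemma \ref{marriedimpliesleg} forces every $j\in E(\beta/\!\!/\alpha)$ into $\leg(w)$, and Lemma \ref{dominationimpliesnotinleg} forces every $j\in SE(\beta/\!\!/\alpha)$ out of $\leg(w)$ — and this holds for \emph{every} valid $w$. Hence the only indices on which $\leg(w)$ is not determined are those in $NE(\beta/\!\!/\alpha)$, giving an injection from $\{w\in CRHW_n: w(\alpha)=\beta\}$ into the set of subsets of $NE(\beta/\!\!/\alpha)$, i.e.\ an upper bound of $2^{\vert NE(\beta/\!\!/\alpha)\vert}$ on the count. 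It remains to prove surjectivity: that every subset $S\subseteq NE(\beta/\!\!/\alpha)$ actually arises as $\leg(w)\cap NE(\beta/\!\!/\alpha)$ for some valid $w$.

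For surjectivity I would start from the word $w_0$ constructed in Lemma \ref{reversehookwordgivenncborderstrip}, which witnesses $\beta\in A_{\alpha,n}$, and then toggle the membership of one $NE$-index at a time. Given a valid $w$ and $j\in NE(\beta/\!\!/\alpha)$, Lemma \ref{submissionimpliesanything} produces a new reverse hookword $w'$ with the same content as $w$, with $\leg(w')=\leg(w)\triangle\{j\}$, and still satisfying $w'(\alpha)=\beta$; moreover $w'$ is connected because it has the same support as $w$ and $w(\alpha)=\beta$ forces $\supp(w)$ to be an interval. Applying this toggle repeatedly — note that toggling at a fixed $j$ does not affect whether any other $j'\in NE(\beta/\!\!/\alpha)$ lies in the leg, since Lemma \ref{submissionimpliesanything} changes only the single index $j$ — lets me reach any prescribed subset $S\subseteq NE(\beta/\!\!/\alpha)$ starting from $w_0$. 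Therefore the map $w\mapsto \leg(w)\cap NE(\beta/\!\!/\alpha)$ is a bijection onto the power set of $NE(\beta/\!\!/\alpha)$, and the count is exactly $2^{\vert NE(\beta/\!\!/\alpha)\vert}$.

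The main obstacle I expect is the bookkeeping needed to be sure the toggles in the last paragraph are independent — that changing $\leg(w)$ at one $NE$-index leaves untouched the forced $E$/$SE$ assignments and the (already fixed) content vector, so that after all toggles the word is still a legitimate connected reverse hookword with $w(\alpha)=\beta$. This is exactly what Lemmas \ref{submissionimpliesanything}, \ref{marriedimpliesleg}, and \ref{dominationimpliesnotinleg} are designed to guarantee, so the argument should go through cleanly once these are marshalled in the right order; the remaining work is just verifying that "same content, leg differs by a single $NE$-index" characterizes the valid words, which the injectivity half already gives.
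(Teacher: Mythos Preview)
Your proposal is correct and follows essentially the same approach as the paper's proof: both observe that the content vector is forced, that a reverse hookword is then determined by $\leg(w)$, and then invoke Lemmas~\ref{marriedimpliesleg}, \ref{dominationimpliesnotinleg}, and \ref{submissionimpliesanything} to conclude that exactly the subsets of $NE(\beta/\!\!/\alpha)$ parametrize the valid words. Your surjectivity argument (starting from the explicit $w_0$ of Lemma~\ref{reversehookwordgivenncborderstrip} and toggling one $NE$-index at a time) is slightly more explicit than the paper's, which simply asserts that every $\leg(w)=E(\beta/\!\!/\alpha)\uplus X\uplus\{p\}$ works, but the substance is identical.
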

\begin{proof}
Let $p$ be the maximum element of $\supp(\beta/\!\!/\alpha)$. Then we know that 
\begin{eqnarray}
\supp(\beta/\!\!/\alpha)=\{p\}\uplus E(\beta/\!\!/\alpha)\uplus SE(\beta/\!\!/\alpha)\uplus NE(\beta/\!\!/\alpha).
\end{eqnarray}
Let $w$ be an element of $CRHW_n$ such that $w(\alpha)=\beta$. Since we know that the number of instances of $\upl_k$ in $w$ is equal to the number of boxes in column $k$ of $\beta/\!\!/\alpha$, $w$ is completely determined by $\leg(w)$.

Now by Lemma \ref{marriedimpliesleg}, we have that every element of $E(\beta/\!\!/\alpha)$ belongs to $\leg(w)$. Lemma \ref{dominationimpliesnotinleg} implies that every element of $SE(\beta/\!\!/\alpha)$ does not belong to $\leg(w)$. As far as elements of $NE(\beta/\!\!/\alpha)$ are concerned, we know by Lemma \ref{submissionimpliesanything}, that it does not matter whether they belong to $\leg(w)$ or not, as the final shape $\beta$ is going to be the same. Thus, for every subset $X\subseteq NE(\beta/\!\!/\alpha)$, the word $w\in CRHW_n$ satisfying  
\begin{eqnarray}
\leg(w)=E(\beta/\!\!/\alpha)\uplus X\uplus \{p\}
\end{eqnarray}
has the property that $w(\alpha)=\beta$. Hence the number of such words is $2^{\vert NE(\beta/\!\!/\alpha)\vert}$ as claimed.
\end{proof}

\begin{Lemma}
Let $\beta\in B_{\alpha,n}$. Then
\begin{eqnarray*}
\displaystyle\sum_{\substack{w\in CRHW_n\\w(\alpha)=\beta}}(-1)^{\asc(w)}&=& (-1)^{n-1-\vert E(\beta/\!\!/\alpha)\vert}\delta_{0,\vert NE(\beta/\!\!/\alpha)\vert}
\end{eqnarray*}
where $\delta$ denotes the Kronecker delta function.
\end{Lemma}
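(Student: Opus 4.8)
The plan is to combine the enumeration provided by Lemma \ref{cardinalityvalidhookwords} with a direct evaluation of $\asc$ on the contributing words. Write $p$ for the maximum element of $\supp(\beta/\!\!/\alpha)$, and abbreviate $E = E(\beta/\!\!/\alpha)$, $SE = SE(\beta/\!\!/\alpha)$ and $NE = NE(\beta/\!\!/\alpha)$. By (the proof of) Lemma \ref{cardinalityvalidhookwords}, the words $w \in CRHW_n$ with $w(\alpha) = \beta$ are exactly the words $w_X$ indexed by the subsets $X \subseteq NE$, where $w_X$ is the unique connected reverse hookword whose content vector records the number of boxes in each column of $\beta/\!\!/\alpha$ and which satisfies $\leg(w_X) = E \uplus X \uplus \{p\}$. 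Note that $E$, $NE$ and $\{p\}$ are pairwise disjoint by the Remark following Example \ref{exampleforelementofbalpha}, so this is genuinely a disjoint union and $|\leg(w_X)| = |E| + |X| + 1$.

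The first step is the identity $\asc(w) = n - |\leg(w)|$, valid for every $w \in RHW_n$. Indeed, if $w$ is a reverse $k$-hookword of length $n$, then $\arm(w)$ consists of the $k+1$ indices $i_1, \ldots, i_{k+1}$ (counted with multiplicity), whence $\asc(w) = |\arm(w)| - 1 = k$; and the indices $i_{k+1} > i_{k+2} > \cdots > i_n$ recorded in $\leg(w)$ are strictly decreasing, hence pairwise distinct, so $|\leg(w)| = n - k$. Feeding $w = w_X$ into this identity gives $\asc(w_X) = n - 1 - |E| - |X|$, which depends on $X$ only through $|X|$.

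The second step is the summation. Using Lemma \ref{cardinalityvalidhookwords} to replace the sum over words by a sum over subsets of $NE$, we obtain
\[
\sum_{\substack{w \in CRHW_n \\ w(\alpha) = \beta}} (-1)^{\asc(w)}
= \sum_{X \subseteq NE} (-1)^{n-1-|E|-|X|}
= (-1)^{n-1-|E|} \sum_{i=0}^{|NE|} \binom{|NE|}{i} (-1)^{i},
\]
and the binomial theorem evaluates the last sum as $(1-1)^{|NE|} = 0^{|NE|} = \delta_{0,|NE|}$. This is precisely the asserted formula.

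I expect no serious obstacle here: once Lemma \ref{cardinalityvalidhookwords} is available, the argument is a short inclusion--exclusion. The only point needing a little care is the identity $\asc(w) = n - |\leg(w)|$, which relies on reading $\arm(w)$ as a list with multiplicities while the entries of $\leg(w)$ are automatically distinct; in particular no further structural analysis of the nc border strip $\beta/\!\!/\alpha$ (such as its precise column sizes) is required, since the sign $(-1)^{\asc(w_X)}$ already collapses under the summation over $X \subseteq NE$.
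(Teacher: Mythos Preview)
Your proof is correct and follows essentially the same route as the paper: both use the parametrization from Lemma \ref{cardinalityvalidhookwords} to rewrite the sum over words as a sum over subsets $X\subseteq NE(\beta/\!\!/\alpha)$, invoke the identity $\asc(w)=n-|\leg(w)|$, and collapse the resulting alternating sum. Your added remark that $\arm(w)$ is read as a list with multiplicities while the entries of $\leg(w)$ are automatically distinct is a helpful clarification of a point the paper leaves implicit.
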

\begin{proof}
Firstly, recall that $\asc(w)=\vert\arm(w)\vert-1$. This in turn implies that $\asc(w)=n-\vert\leg(w)\vert$. By Lemma \ref{marriedimpliesleg}, we know that $E(\beta/\!\!/\alpha)\subseteq \leg(w)$. Further, if $p$ is the maximum element of $\supp(w)$, then $p\in \leg(w)$ as well. Lemma \ref{cardinalityvalidhookwords} implies that any subset of $NE(\beta/\!\!/\alpha)$ can belong to $\leg(w)$, and Lemma \ref{dominationimpliesnotinleg} gives that no element of $SE(\beta/\!\!/\alpha)$ belongs to $\leg(w)$.

Thus, we have the following sequence of equalities.
\begin{eqnarray}
\displaystyle\sum_{\substack{w\in CRHW_n\\w(\alpha)=\beta}}(-1)^{\asc(w)}&=& \displaystyle\sum_{\substack{w\in CRHW_n\\w(\alpha)=\beta}}(-1)^{n-\vert\leg(w)\vert}\nonumber\\&=& \displaystyle\sum_{X\subseteq NE(\beta/\!\!/\alpha)}(-1)^{n-1-\vert E(\beta/\!\!/\alpha)\vert-\vert X\vert}\nonumber\\&=&(-1)^{n-1-\vert E(\beta/\!\!/\alpha)\vert}\displaystyle\sum_{X\subseteq NE(\beta/\!\!/\alpha)}(-1)^{\vert X\vert}\nonumber\\&=& (-1)^{n-1-\vert E(\beta/\!\!/\alpha)\vert}\delta_{0,\vert NE(\beta/\!\!/\alpha)\vert}
\end{eqnarray}
\end{proof}
What follows is essentially a restatement of the above lemma.
\begin{Corollary}
If $\beta\in B_{\alpha,n}$ then 
\begin{eqnarray*}
k_{\beta}=\left\lbrace \begin{array}{cc}0 & \vert NE(\beta/\!\!/\alpha)\vert\geq 1\\ (-1)^{n-1-\vert E(\beta/\!\!/\alpha)\vert} & \vert NE(\beta/\!\!/\alpha)\vert=0.\end{array}\right.
\end{eqnarray*}
\end{Corollary}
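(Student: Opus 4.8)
The plan is essentially bookkeeping: unwind the definition of $k_{\beta}$ and quote the lemma just proved. Recall that at the start of this section $k_{\beta}$ was defined as $\sum_{w\in CRHW_n,\, w(\alpha)=\beta}(-1)^{\asc(w)}$, which is exactly the sum evaluated in the preceding lemma. Hence for $\beta\in B_{\alpha,n}$ we obtain at once
\[
k_{\beta}=(-1)^{n-1-\vert E(\beta/\!\!/\alpha)\vert}\,\delta_{0,\vert NE(\beta/\!\!/\alpha)\vert}.
\]

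All that remains is to split into the two cases recorded by the Kronecker delta. If $\vert NE(\beta/\!\!/\alpha)\vert=0$, then $\delta_{0,\vert NE(\beta/\!\!/\alpha)\vert}=1$ and $k_{\beta}=(-1)^{n-1-\vert E(\beta/\!\!/\alpha)\vert}$; if $\vert NE(\beta/\!\!/\alpha)\vert\geq 1$, then $\delta_{0,\vert NE(\beta/\!\!/\alpha)\vert}=0$ and $k_{\beta}=0$. This is precisely the asserted piecewise formula, so the corollary follows.

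There is no genuine obstacle in the corollary itself; the substance is entirely in the chain of Lemmas \ref{marriedimpliesleg}, \ref{dominationimpliesnotinleg}, \ref{submissionjgeq2}, \ref{submissionimpliesanything} and \ref{cardinalityvalidhookwords}, which feed into the summation lemma. The point of recording the corollary separately is to have the coefficient in the cancellation-free form needed downstream: the surviving terms in $\Psi_n\cdot\ncsa$ are exactly the $\ncsb$ with $NE(\beta/\!\!/\alpha)$ empty, and their signs are controlled by $n-1-\vert E(\beta/\!\!/\alpha)\vert$, a quantity one anticipates will be identified with the height $ht(\beta/\!\!/\alpha)$ of the nc border strip in the final Murnaghan-Nakayama rule.
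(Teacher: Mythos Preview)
Your proposal is correct and matches the paper's approach exactly: the paper itself introduces this corollary with the phrase ``What follows is essentially a restatement of the above lemma'' and gives no further argument. Your unpacking of the Kronecker delta into the two cases is precisely the intended (and only) content.
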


Now, consider the following set of compositions.
\begin{eqnarray*}
P_{\alpha,n}=\{\beta: \beta\in B_{\alpha,n}, \vert NE(\beta/\!\!/\alpha)\vert=0\}
\end{eqnarray*}
Before we state a Murnaghan-Nakayama rule in its final definitive form, we will need another short lemma.
\begin{Lemma}\label{lem:exponentequalsheight}
If $\beta/\!\!/\alpha$ is an nc border strip of size $n$, then $n-1-|E(\beta/\!\!/\alpha)|=ht(\beta/\!\!/\alpha)$.
\end{Lemma}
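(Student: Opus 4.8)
The goal is the numerical identity $n - 1 - |E(\beta/\!\!/\alpha)| = ht(\beta/\!\!/\alpha)$ for an nc border strip of size $n$. Recall that $ht(\beta/\!\!/\alpha)$ is one less than the number of rows occupied by $\beta/\!\!/\alpha$, so the claim is equivalent to saying that the number of rows occupied equals $n - |E(\beta/\!\!/\alpha)|$. The natural strategy is a counting/bijection argument on the boxes of the strip: I would partition the $n$ boxes of $\beta/\!\!/\alpha$ according to whether each box is, or is not, the leftmost box of $\beta/\!\!/\alpha$ in its row.

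First I would observe that the number of rows occupied by $\beta/\!\!/\alpha$ equals the number of boxes that are leftmost-in-their-row (within the skew shape). So it suffices to show that the number of boxes of $\beta/\!\!/\alpha$ that are \emph{not} leftmost in their row is exactly $|E(\beta/\!\!/\alpha)|$. To do this I would set up a bijection between non-leftmost boxes and elements of $E(\beta/\!\!/\alpha)$: given a box $(i,j)$ of $\beta/\!\!/\alpha$ that is not leftmost in its row, the box immediately to its left $(i,j-1)$ is also in $\beta/\!\!/\alpha$ (since rows of a skew shape are contiguous and, being a strip inside a reverse composition diagram, there is no inner-shape box sitting between two outer-shape boxes in the same row — this should be checked, but follows from $\alpha <_c \beta$ together with the fact that $\beta/\!\!/\alpha$ has only $n$ boxes forming an interval shape). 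Thus $j-1 \in E(\beta/\!\!/\alpha)$ by the definition of ``column $j$ is east of column $j-1$.'' Conversely, if $j \in E(\beta/\!\!/\alpha)$, there are boxes $(i,j)$ and $(i,j+1)$ in $\beta/\!\!/\alpha$, and by the defining conditions of an nc border strip the box $(i,j)$ is the unique such box (it is the bottommost box of column $j$ if $j=1$, the topmost if $j\geq 2$), so there is a unique box $(i,j+1)$ to the right of a same-row box, giving a well-defined inverse map.

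The main obstacle, and the only place real care is needed, is verifying that each column $j$ contributes at most one ``east pair'': I must rule out the possibility that two different rows $i, i'$ both have boxes $(i,j),(i,j+1)$ and $(i',j),(i',j+1)$ in $\beta/\!\!/\alpha$ for $j \in E(\beta/\!\!/\alpha)$. This is exactly what the nc border strip conditions prevent — if $j=1$ then $(i,1)$ and $(i',1)$ would both have to be the bottommost box of column $1$, forcing $i = i'$; if $j \geq 2$ then $(i,j)$ and $(i',j)$ would both have to be the topmost box of column $j$, again forcing $i = i'$. So the map ``$E(\beta/\!\!/\alpha) \ni j \mapsto$ the unique non-leftmost box in column $j+1$'' is injective, and it is the inverse of ``non-leftmost box $(i,j) \mapsto j-1$.'' I would also dispatch the trivial case $E(\beta/\!\!/\alpha) = \varnothing$, where the strip has all $n$ boxes in distinct rows and height $n-1$, as a sanity check.

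Putting the pieces together: the number of occupied rows is $n$ minus the number of non-leftmost boxes, which is $n - |E(\beta/\!\!/\alpha)|$; subtracting $1$ gives $ht(\beta/\!\!/\alpha) = n - 1 - |E(\beta/\!\!/\alpha)|$, as claimed.
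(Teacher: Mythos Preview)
Your argument is correct and is essentially the same counting argument the paper gives: the paper observes that a row with $k$ boxes contributes $k-1$ elements to $E(\beta/\!\!/\alpha)$, sums over the $r$ occupied rows to get $|E(\beta/\!\!/\alpha)| = n - r$, and concludes $n-1-|E(\beta/\!\!/\alpha)| = r-1 = ht(\beta/\!\!/\alpha)$. Your bijection between non-leftmost boxes and elements of $E(\beta/\!\!/\alpha)$ is exactly this count phrased differently, and you are more explicit than the paper in checking (via the nc border strip conditions) that distinct rows cannot contribute the same column index to $E(\beta/\!\!/\alpha)$, which is the unstated step that makes the paper's sum equal $|E(\beta/\!\!/\alpha)|$ rather than an overcount.
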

\begin{proof}
Suppose there are $k>0$ boxes in some row of $\beta/\!\!/\alpha$, then this row contributes $k-1$ to $|E(\beta/\!\!/\alpha)|$. Let the number of boxes in rows that contain at least one box be $k_1,k_2,\ldots,k_r$ for some positive integer $r$. Notice that $\sum_{j=1}^{r}(k_j-1)$ equals $n-r$. Thus we have that 
\begin{eqnarray}
n-1-|E(\beta/\!\!/\alpha)|&=&n-1-\displaystyle\sum_{j=1}^{r}(k_j-1)\nonumber\\&=&r-1.
\end{eqnarray}
Now notice that $r-1$ is precisely $ht(\beta/\!\!/\alpha)$.
\end{proof}
This given, we can write down a Murnaghan-Nakayama rule for noncommutative Schur functions as follows.
\begin{Theorem}\label{MurNakfinal}
Given a composition $\alpha$, 
\begin{eqnarray*}
\Psi_{n}\cdot\ncsa=\displaystyle\sum_{\beta\in P_{\alpha,n}}(-1)^{ht(\beta/\!\!/\alpha)}\ncsb.
\end{eqnarray*}
\end{Theorem}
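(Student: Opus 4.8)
The plan is to assemble the pieces that have already been established, since all the combinatorial work is now behind us. Starting from the cancellation-prone expansion \eqref{advMurNak}, namely $\Psi_n\cdot\ncsa=\sum_{w\in CRHW_n}(-1)^{\asc(w)}w(\alpha)$, I would first group the terms by their output composition, writing $\Psi_n\cdot\ncsa=\sum_{\beta}k_{\beta}\ncsb$ with $k_{\beta}=\sum_{w\in CRHW_n,\,w(\alpha)=\beta}(-1)^{\asc(w)}$, exactly as in the discussion opening Section \ref{sec:finalmurnaghannakayama}. The first observation to record is that the only $\beta$ contributing to this sum are those for which some $w\in CRHW_n$ satisfies $w(\alpha)=\beta$, i.e.\ $\beta\in A_{\alpha,n}$, and by the lemma establishing $A_{\alpha,n}=B_{\alpha,n}$ this is the same as saying $\beta/\!\!/\alpha$ is an nc border strip of size $n$; for every other $\beta$ the coefficient $k_{\beta}$ is automatically $0$.

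Next I would invoke the Corollary computing $k_{\beta}$ for $\beta\in B_{\alpha,n}$: it gives $k_{\beta}=0$ whenever $\vert NE(\beta/\!\!/\alpha)\vert\geq 1$, and $k_{\beta}=(-1)^{\,n-1-\vert E(\beta/\!\!/\alpha)\vert}$ whenever $\vert NE(\beta/\!\!/\alpha)\vert=0$. Combining this with the previous paragraph, the entire expansion collapses to a sum over exactly the set $P_{\alpha,n}=\{\beta\in B_{\alpha,n}:\vert NE(\beta/\!\!/\alpha)\vert=0\}$, with each surviving coefficient equal to $(-1)^{\,n-1-\vert E(\beta/\!\!/\alpha)\vert}$.

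Finally, for each $\beta\in P_{\alpha,n}$ the shape $\beta/\!\!/\alpha$ is an nc border strip of size $n$, so Lemma \ref{lem:exponentequalsheight} applies and rewrites the exponent as $n-1-\vert E(\beta/\!\!/\alpha)\vert=ht(\beta/\!\!/\alpha)$. Substituting this identification into the collapsed sum yields $\Psi_n\cdot\ncsa=\sum_{\beta\in P_{\alpha,n}}(-1)^{ht(\beta/\!\!/\alpha)}\ncsb$, which is the assertion of the theorem. I do not anticipate any genuine obstacle here; the only point requiring a little care is bookkeeping the index set, namely making explicit that compositions outside $B_{\alpha,n}$ never appear and that the vanishing of $k_{\beta}$ for $\vert NE(\beta/\!\!/\alpha)\vert\geq 1$ is precisely what trims $B_{\alpha,n}$ down to $P_{\alpha,n}$.
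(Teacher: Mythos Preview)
Your proposal is correct and is exactly the argument the paper intends: the theorem is stated immediately after Lemma~\ref{lem:exponentequalsheight} without a separate proof, precisely because it is the straightforward assembly of the identity $A_{\alpha,n}=B_{\alpha,n}$, the Corollary computing $k_{\beta}$, and Lemma~\ref{lem:exponentequalsheight}, in the order you describe.
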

\begin{Example}
Consider the computation of $\Psi_4\cdot \ncs_{(2,1,3)}$. We need to find all compositions $\beta\in B_{(2,1,3),4}$ satisfying $\vert NE(\beta/\!\!/(2,1,3))\vert=0$. We will start by listing all the elements of $B_{(2,1,3),4}$ where the boxes of $\beta/\!\!/\alpha$ will be shaded blue. In the list below, beneath every composition we have noted those reverse hookwords that act on $(2,1,3)$ to give the composition under consideration. For the sake of clarity, the occurrences of $\mathfrak{t}$ in the reverse hookwords have been suppressed. Thus, for example, $1121$ actually denotes $\mathfrak{t}_1\mathfrak{t}_1\mathfrak{t}_2\mathfrak{t}_1$.
$$
\ytableausetup{smalltableaux}
\begin{array}{lllllll}
$\begin{ytableau}
\myb\\\myb\\\myb\\\myb\\\cw &\cw\\\cw\\\cw &\cw &\cw
\end{ytableau}$ &
$\begin{ytableau}
\myb\\\myb\\\myb&\myb\\\cw &\cw\\\cw\\\cw &\cw &\cw
\end{ytableau}$ &
$\begin{ytableau}
\myb\\\myb\\\myb\\\cw &\cw\\\cw&\myb\\\cw &\cw &\cw
\end{ytableau}$ &
$\begin{ytableau}
\myb\\\myb&\myb\\\cw &\cw\\\cw&\myb\\\cw &\cw &\cw
\end{ytableau}$ &
$\begin{ytableau}
\myb\\\myb&\myb&\myb\\\cw &\cw\\\cw\\\cw &\cw &\cw
\end{ytableau}$ &
$\begin{ytableau}
\myb\\\myb&\myb\\\cw &\cw&\myb\\\cw\\\cw &\cw &\cw
\end{ytableau}$ &
$\begin{ytableau}
\myb\\\myb\\\cw &\cw&\myb\\\cw&\myb\\\cw &\cw &\cw
\end{ytableau}$\\
$1111$ & $1121$ & $1112$ & $1221$ & $1321$ & $1231$ & $1132,1123$\\
\text{} & \text{} &  \text{} & \text{} & \text{} & \text{} & \text{}\\
$\begin{ytableau}
\myb &\myb\\\cw &\cw&\myb\\\cw&\myb\\\cw &\cw &\cw
\end{ytableau}$&
$\begin{ytableau}
\myb&\myb&\myb\\\cw &\cw\\\cw&\myb\\\cw &\cw &\cw
\end{ytableau}$&
$\begin{ytableau}
\myb&\myb&\myb\\\cw &\cw&\myb\\\cw\\\cw &\cw &\cw
\end{ytableau}$&
$\begin{ytableau}
\myb\\\cw &\cw &\myb\\\cw&\myb&\myb\\\cw &\cw &\cw
\end{ytableau}$&
$\begin{ytableau}
\myb&\myb&\myb&\myb\\\cw &\cw\\\cw\\\cw &\cw &\cw
\end{ytableau}$ &
$\begin{ytableau}
\myb&\myb&\myb\\\cw &\cw\\\cw\\\cw &\cw &\cw&\myb
\end{ytableau}$&
$\begin{ytableau}
\myb&\myb\\\cw &\cw&\myb&\myb\\\cw\\\cw &\cw &\cw
\end{ytableau}$ \\
$2231$ & $2321$ & $3321$ & $1332$ & $4321$ & $3421$ & $2431$\\
\text{} & \text{} &  \text{} & \text{} & \text{} & \text{} & \text{}\\
$\begin{ytableau}
\myb&\myb\\\cw &\cw&\myb\\\cw\\\cw &\cw &\cw&\myb
\end{ytableau}$&
$\begin{ytableau}
\myb\\\cw &\cw&\myb&\myb\\\cw&\myb\\\cw &\cw &\cw
\end{ytableau}$&
$\begin{ytableau}
\myb\\\cw &\cw&\myb\\\cw&\myb\\\cw &\cw &\cw&\myb
\end{ytableau}$&
$\begin{ytableau}
\cw &\cw &\myb &\myb\\\cw&\myb\\\cw &\cw &\cw &\myb
\end{ytableau}$ &
$\begin{ytableau}
\cw &\cw &\myb &\myb&\myb\\\cw&\myb\\\cw &\cw &\cw
\end{ytableau}$&
$\begin{ytableau}
\cw &\cw&\myb\\\cw&\myb\\\cw &\cw &\cw&\myb &\myb
\end{ytableau}$ &
$\begin{ytableau}
\cw &\cw&\myb&\myb \\\cw&\myb&\myb\\\cw &\cw &\cw
\end{ytableau}$ \\
$2341$ & $1432, 1243$ & $1342,1234$ & $4432,2443$ & $5432,2543$ & $3542,2354$ & $3432$\\
\text{} & \text{} &  \text{} & \text{} & \text{} & \text{} & \text{}\\
\end{array}$$
$$
\ytableausetup{smalltableaux}
\begin{array}{lllllll}
$\begin{ytableau}
\cw &\cw&\myb \\\cw&\myb&\myb\\\cw &\cw &\cw &\myb
\end{ytableau}$&
$\begin{ytableau}
\cw &\cw&\myb&\myb&\myb&\myb \\\cw\\\cw &\cw &\cw
\end{ytableau}$&
$\begin{ytableau}
\cw &\cw&\myb \\\cw\\\cw &\cw &\cw&\myb &\myb &\myb
\end{ytableau}$&
$\begin{ytableau}
\cw &\cw&\myb&\myb &\myb\\\cw\\\cw &\cw &\cw &\myb
\end{ytableau}$&
$\begin{ytableau}
\cw &\cw \\\cw\\\cw &\cw &\cw&\myb&\myb&\myb&\myb
\end{ytableau}$&
\text{ }& \text{ }\\
$3342$ & $6543$ & $3654$ & $4543$ & $7654$ & \text{} & \text{ }\end{array}
$$
Now we list those compositions $\beta$ from the above list that satisfy $\vert NE(\beta/\!\!/\alpha)\vert\geq 1 $. These are
$$\ytableausetup{smalltableaux}
\begin{ytableau}
\myb\\\myb\\\cw &\cw&\myb\\\cw&\myb\\\cw &\cw &\cw
\end{ytableau}\hspace{0.5mm},
\begin{ytableau}
\myb\\\cw &\cw&\myb&\myb\\\cw&\myb\\\cw &\cw &\cw
\end{ytableau}\hspace{0.5mm},
\begin{ytableau}
\myb\\\cw &\cw&\myb\\\cw&\myb\\\cw &\cw &\cw&\myb
\end{ytableau}\hspace{0.5mm},
\begin{ytableau}
\cw &\cw &\myb &\myb\\\cw&\myb\\\cw &\cw &\cw &\myb
\end{ytableau}\hspace{0.5mm},
\begin{ytableau}
\cw &\cw &\myb &\myb&\myb\\\cw&\myb\\\cw &\cw &\cw
\end{ytableau}\hspace{0.5mm},
\begin{ytableau}
\cw &\cw&\myb\\\cw&\myb\\\cw &\cw &\cw&\myb &\myb
\end{ytableau}\hspace{0.5mm}.
$$
These are the elements of $B_{(2,1,3),4}$ that appear with coefficient $0$. The rest appear with either $+1$ or $-1$ depending on the parity of $ht(\beta/\!\!/\alpha)$. We will compute the coefficient of 
$$
\ytableausetup{smalltableaux}
\begin{ytableau}
\cw &\cw&\myb&\myb \\\cw&\myb&\myb\\\cw &\cw &\cw
\end{ytableau}.$$
Since $ht((4,3,3)/\!\!/(2,1,3))=1$, the coefficient of $\ncs_{(4,3,3)}$ in $\Psi_{4}\cdot\ncs_{(2,1,3)}$ is $-1$. 

The complete expansion is thus as follows.
\begin{eqnarray*}
\Psi_{4}\cdot \ncs_{(2,1,3)}&=&-\ncs_{(1, 1, 1, 1, 2, 1, 3)} + \ncs_{(1, 1, 2, 2, 1, 3)}- \ncs_{(1, 1, 1, 2, 2, 3)} +
\ncs_{(1, 2, 2, 2, 3)}- \ncs_{(1, 3, 2, 1, 3)} + \ncs_{(1, 2, 3, 1, 3)}\\&& + \ncs_{(2, 3, 2, 3)} - \ncs_{(3, 2, 2, 3)} - \ncs_{(3, 3, 1, 3)}+\ncs_{(1, 3, 3, 3)}+ \ncs_{(4, 2, 1, 3)} - \ncs_{(3, 2, 1, 4)} - \ncs_{(2, 4, 1, 3)}\\&&+ \ncs_{(2, 3, 1, 4)}- \ncs_{(4, 3, 3)} + \ncs_{(3, 3, 4)} + \ncs_{(6, 1, 3)} - \ncs_{(3, 1, 6)}- \ncs_{(5, 1, 4)} +
\ncs_{(2, 1, 7)}  
\end{eqnarray*}
\end{Example}
\bibliographystyle{amsalpha}

\end{document}